\numberwithin{equation}{section}
\numberwithin{figure}{section}
\theoremstyle{plain}
\newtheorem{thm}{\protect\theoremname}[section]
  \theoremstyle{remark}
  \newtheorem{rem}[thm]{\protect\remarkname}
  \theoremstyle{definition}
  \newtheorem{defn}[thm]{\protect\definitionname}
  \theoremstyle{plain}
  \newtheorem{lem}[thm]{\protect\lemmaname}
  \theoremstyle{plain}
  \newtheorem{cor}[thm]{\protect\corollaryname}
  \theoremstyle{plain}
  \newtheorem{prop}[thm]{\protect\propositionname}
  \theoremstyle{definition}
  \newtheorem{example}[thm]{\protect\examplename}
  \theoremstyle{remark}
  \newtheorem*{rem*}{\protect\remarkname}
  \providecommand{\corollaryname}{Corollary}
  \providecommand{\definitionname}{Definition}
  \providecommand{\examplename}{Example}
  \providecommand{\lemmaname}{Lemma}
  \providecommand{\propositionname}{Proposition}
  \providecommand{\remarkname}{Remark}
\providecommand{\theoremname}{Theorem}
\begin{document}

\title{metric $1$-spaces}

\author{Ittay Weiss}
\begin{abstract}
A generalization of metric space is presented which is shown to admit
a theory strongly related to that of ordinary metric spaces. To avoid
the topological effects related to droping any of the axioms of metric
space, first a new, and equivalent, axiomatization of metric space
is given which is then generalized from a fresh point of view. Naturally
arising examples from metric geometry are presented. 
\end{abstract}

\maketitle

\section{\label{sec:Introduction}Introduction}

Symmetry, in the axiomatization of metric space, was under stress
early on in the development of the theory, for various reasons. Simply
neglecting the axiom of symmetry carries with it certain topological
difficulties that are discomforting, thus dictating proceeding with
caution. Below we argue, by presenting a completely equivalent axiomatization
of metric space that makes no mention of symmetry, that symmetry as
an axiom had received far too much attention. This new axiomatization
enables us to develop a new extension of metric space whose study
is the aim of this work. But first, some more background information
and motivation regarding metric spaces, the symmetry axiom, and its
weakening.

\subsection*{History}

The idea that the abstract notion of distance is a symmetric one appears
to be deeply ingrained and probably emanates from our basic intuition
about distance in Euclidean spaces. However, the subtleties of the
imposition of symmetry were noted early on in the development of metric
spaces and General Topology. In 1918 Finsler introduced in \cite{finsler1918ueber},
his Ph.D. dissertation, what was later coined by Élie Cartan \emph{Finsler
geometry}; a generalization of Riemannian manifold having an induced
quasimetric instead of a metric. In 1931 Wilson, in an article titled
{}``On quasi metric spaces'' (\cite{wilson1931quasi}), comments
that: 
\begin{quotation}
{}``In one sense a quasi-metric space is merely the result of suppressing
the axiom {[}of symmetry{]} from the definition of metric space. Usually
the result of such a limitation on a set of axioms is to diminish
the number of theorems easily deducible, but in this case there is
an embarrassing richness of material''.
\end{quotation}
Some 38 years later, Stoltenberg, in \cite{stoltenberg1969quasi},
an article bearing the exact same title, reviews the than recent developments
in the study of the implications of removing the limitation of symmetry.
More on the history of metric spaces can be found in \cite{ribeiro1943espacesa}.

\subsection*{Topological difficulties}

The aim of this work is to develop a new generalization of metric
space with little ill effect on the accompanying topological notions.
Let us first recount the topological pathologies that arise when each
of the axioms of metric space is neglected.

\subsubsection*{Zero distance and non-Hausdorff spaces}

Allowing distinct points to have distance equal to $0$ corresponds
to the topological phenomenon of a non-Hausdorff topology. These were
initially considered to be invalid spaces, but the rich structure
of finite topologies shows such restrictions were not in place (see
\cite{MR1173270} for more on the applicability of non-Hausdorff spaces).

\subsubsection*{Positive self distance and ghostly neighborhoods}

Allowing a point to have positive distance from itself (such spaces
are called partial metrics or dislocated metrics) has the topological
effect of a point possessing neighborhoods not containing that point.
Topologically, such ghostly neighborhoods appear to be very pathological.

\subsubsection*{The triangle inequality and open balls}

It is easily seen that the triangle inequality assures that open balls
are open sets. Thus neglecting the triangle inequality will destroy
the most fundamental example of an open set.

\subsubsection*{Symmetry and closed balls}

It is equally easy to see that symmetry assures that closed balls
are closed sets. Thus, neglecting the axiom of symmetry will destroy
the most fundamental example of a closed set. 
\begin{rem}
Viewed this way, it is a bit peculiar that the triangle inequality
axiom is related to open balls being open sets while the symmetry
axiom is related to closed balls being closed sets. The two axioms
do not appear to be dual to each other and one may wonder if there
is an equivalent axiomatization that better reflects the open/closed
duality nature of topology. Indeed, an affirmative answer is provided
incidentally by the axiomatization we present below. 
\end{rem}

\subsection*{Further motivation}

The relevance of quasi-metrics to Physics and Biology goes back, respectively,
at least to 1941 with Rander's \cite{randers1941asymmetrical} and
to 1976 with Waterman, Smith, and Beyer's \cite{MR0408876}. In \cite{lawvere1973metric}
Lawvere shows that a significant portion of metric geometry that does
not rely on symmetry can be seen as a form of extended logic and,
at the same time, as a special case of the theory of enriched categories.
More recently, Vickers continued this line of investigation in \cite{vickers2005localic}
and a unifying approach is given in \cite{MR2044145}. Some general
properties of quasimetrics are presented in \cite{deza2000quasi}
and \cite{shore1996metrizability}. An asymmetric Arzelà-Ascoli theorem
is established in \cite{MR2328014}. Generalized metrics in the theory
of computation can be found in \cite{MR2783140} and \cite{seda2010generalized},
and in computation semantics in \cite{MR2013571} and \cite{MR1475088}.
For further applications in Computer Science we mention \cite{ali2009space}
and \cite{brattka2003recursive}. The recent encyclopedia of distances
\cite{deza2009encyclopedia}, including a wealth of generalizations
of metric space, is yet another reflection of the importance of these
structures. 

It thus appears that the demand for symmetry to hold in any metric
space might have been adopted prematurely. Aside from these motivating
forces for considering non-symmetry, a very naive, yet illuminating,
reason is given by Gromov in \cite{gromov2006metric} when referring
to how the symmetry axiom in the definition of a metric space unpleasantly
limits many applications:
\begin{quote}
{}``the effort of climbing up to the top of a mountain, in real life
as well as in mathematics, is not at all the same as descending back
to the starting point''.
\end{quote}
Another reason to consider various ways in which symmetry can be weakened
is of a more theoretical nature. In order to understand the role of
the axiom of symmetry in the general theory of metric spaces one is
led to study the effects of weakening it. Perhaps the most studied
instance is that of simply neglecting the axiom of symmetry. The accompanying
topological theory is that of bitopological spaces introduced in \cite{MR0143169}
with its rich structure expounded in \cite{MR2126263}. 

However, as alluded to above, the accompanying bitopological theory
does not seem to have the same intimate relation that topology has
with metric spaces. Below, we present the theory of metric $1$-spaces
for which we are able to establish fundamental results so that both
results and proofs echo the familiar theory of ordinary metric spaces
quite strongly.

\subsection*{Plan of the paper}

Section \ref{sec:Reformulating-the-classical}, after a very simple
analysis of the axioms of metric space, presents a new axiomatization
of metric space which does not mention symmetry at all. The importance
of this reformulation is that it enables new possibilities for generalizations.
Section \ref{sec:begining section weakening} then develops such a
generalization and introduces the main objects of study, namely metric
$1$-spaces. With most of the article devoted to the fine structure
of metric $1$-spaces, Section \ref{sec:The-coarse-structure} is
a detour providing a brief study of the coarse structure of metric
$1$-spaces. In it, \emph{coarse $1$-spaces }are defined and a metrizability
theorem is proven. Section  \ref{sec:convergence} is concerned with
notions of convergence in metric $1$-spaces. Forward sequences and
series are defined, as well as their dual notions of backward sequences
and series. A notion of limit for each of these entities is defined
and is shown to extend the usual notion of limits of sequences in
ordinary metric spaces. Section \ref{sec:Continuity} then introduces
appropriate notions of continuity and studies their interrelations.
Section \ref{sec:Fundamental-results} pieces together the results
of the previous sections to establish three fundamental results about
metric $1$-spaces. These results generalize the familiar results
of ordinary metric space theory regarding metrizability of function
spaces, the equivalence between continuity and uniform continuity
on compact domains, and the Banach fixed point theorem for a contracting
self-map. In Section \ref{sec:Dagger-structures-and} a hierarchy
of symmetry is introduced by means of dagger structures with different
properties. Finally, Section \ref{sec:Examples,-related-structures,}
provides more examples of metric $1$-spaces, presents several possible
applications in physics, and discusses metric $n$-spaces and their
relation to other structures.

\subsection*{Acknowledgements}

The author would like to thank Gavin Seal for an encouraging encounter
in Lausanne.

\section{\label{sec:Reformulating-the-classical}Reformulating the classical
definition}

The definition of a metric space as formulated by Fréchet in 1906
is, of course, very well known, and yet we revisit it here briefly
and in full detail. Our aim in this short section is to examine the
axiomatization and arrive at an equivalent one that makes no mention
of symmetry. While quite a simple result the author, surprisingly,
could not find any trace of it in more than several books and articles.
The significance of this reformulation of the axioms lies in the shift
of focus it enables when generalizing the notion. Suddenly, there
is no need to weaken symmetry since it is no longer demanded explicitly.
The new axiomatization suggests a new possibility for extending the
theory of metric spaces whose exploration is the aim of the sections
that follow. 

Consider a set $X$ and a function $w:X\times X\to\mathbb{R}_{+}$,
where $\mathbb{R}_{+}=\{x\in\mathbb{R}\mid x\ge0\}\cup\{\infty\}$
is the set of extended non-negative real numbers. We say that: 
\begin{itemize}
\item $w$ is \emph{locally finite} if for every $x,y\in X$ holds that
$w(x,y)\ne\infty$.
\item $w$ is \emph{non-degenerate }if, for every $x,y\in X$, $w(x,y)=0$
implies $x=y$.
\item $w$ is \emph{reflexive} if for all $x\in X$ holds that $w(x,x)=0$. 
\item $w$ is \emph{symmetric }if for all $x,y\in X$ holds that $w(x,y)=w(y,x)$. 
\item $w$ satisfies the \emph{full triangle inequality }if the inequalities
\[
|w(x,y)-w(y,z)|\le w(x,z)\le w(x,y)+w(y,z)
\]
hold for all $x,y,z\in X$. 
\item $w$ satisfies the \emph{restricted triangle inequality} if the inequality
\[
w(x,z)\le w(x,y)+w(y,z)
\]
holds for all $x,y,z\in X$.\end{itemize}
\begin{rem}
\label{Remark:convention infty}For the full and restricted triangle
inequalities we agree that $x+\infty=\infty+x=\infty+\infty=\infty$,
that $x-\infty=-\infty$, and that $\infty-x=\infty$ hold for every
real $x\ge0$ (here $-\infty$ is a new symbol satisfying $|-\infty|=\infty$).
The quantity $\infty-\infty$ is left undefined and if $w(x,y)=\infty=w(y,z)$
then the full triangle inequality is to be interpreted as setting
no particular lower bound on $w(x,z)$. We remark as well that while
technically the full triangle inequality consists of two inequalities
we still refer to it in the singular as a single entity to stress
the point that it should be treated as one whole. 
\end{rem}
With this terminology in place we may now state the common-place definition
of metric space.
\begin{defn}
A \emph{metric space} is a set $X$ together with a locally finite,
reflexive, non-degenerate and symmetric function $w:X\times X\to\mathbb{R}_{+}$
which satisfies the restricted triangle inequality. \end{defn}
\begin{lem}
\label{lem:rest plus ref is full}Let $X$ be a set and $w:X\times X\to\mathbb{R}_{+}$
a function. If $w$ is reflexive and satisfies the restricted triangle
inequality then the following are equivalent\end{lem}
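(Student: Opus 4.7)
The plan is to show that, under reflexivity and the restricted triangle inequality, symmetry of $w$ is equivalent to $w$ satisfying the full triangle inequality. These are the natural candidates for the two conditions being equivalated, matching the stated program of obtaining an axiomatization of metric space in which symmetry is not mentioned explicitly but is instead absorbed into a strengthened triangle inequality.

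For the implication from symmetry to the full triangle inequality, I would start with the restricted inequality, whose upper bound $w(x,z) \le w(x,y)+w(y,z)$ is already the upper half of what is wanted. For the lower bound, apply the restricted inequality to the triple $(x,z,y)$ to obtain $w(x,y) \le w(x,z)+w(z,y)$, and then use symmetry to rewrite $w(z,y)=w(y,z)$, which rearranges to $w(x,y)-w(y,z) \le w(x,z)$. Interchanging the roles of $x$ and $z$ (again invoking symmetry) gives $w(y,z)-w(x,y) \le w(x,z)$, and the two estimates combine to $|w(x,y)-w(y,z)| \le w(x,z)$.

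For the reverse implication, the key move is specialization $z=x$ in the lower bound of the full triangle inequality. Reflexivity gives $w(x,x)=0$, so $|w(x,y)-w(y,x)| \le 0$, which forces $w(x,y)=w(y,x)$ whenever the left-hand side makes sense. When both values are finite this is immediate; when both are infinite they agree trivially; the mixed case is ruled out because then the absolute difference would be $\infty$, violating the inequality.

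The one genuine subtlety, and the only place I expect to pause, is bookkeeping with the conventions on $\infty$ laid down in Remark \ref{Remark:convention infty}. In the forward direction I have to verify that when $w(x,y)=\infty$ but $w(y,z)<\infty$ the restricted inequality already forces $w(x,z)=\infty$, so that the lower bound $\infty \le w(x,z)$ is legitimate; when both $w(x,y)$ and $w(y,z)$ are infinite, the lower bound is vacuous by the convention and nothing needs to be checked. In the reverse direction I must rule out the mixed finite/infinite case noted above. None of this is deep, but each step requires an explicit appeal to the conventions of Remark \ref{Remark:convention infty} to keep the argument clean.
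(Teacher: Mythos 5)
Your proposal is correct and follows essentially the same route as the paper: symmetry plus the restricted inequality applied to the permuted triple yields the lower half of the full triangle inequality (the other half by exchanging roles), and conversely specializing the lower bound at $z=x$ together with reflexivity forces $w(x,y)=w(y,x)$. The only difference is your explicit bookkeeping of the $\infty$ cases from Remark \ref{Remark:convention infty}, which the paper leaves implicit; it checks out and changes nothing essential.
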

\begin{itemize}
\item $w$ is symmetric.
\item $w$ satisfies the full triangle inequality.\end{itemize}
\begin{proof}
Assuming symmetry we need to establish that $|w(x,y)-w(y,z)|\le w(x,z)$
which, without loss of generality, would follow from showing that
$w(x,y)\le w(x,z)+w(y,z)$. Using symmetry this is equivalent to $w(x,y)\le w(x,z)+w(z,y)$
which holds by the restricted triangle inequality. In the other direction,
the full triangle inequality implies that $|w(x,y)-w(y,x)|\le w(x,x)=0$,
and symmetry follows. \end{proof}
\begin{cor}
\label{cor:(New,-and-Equivalent,}(Equivalent Definition of Metric
Space) A metric space is a set $X$ together with a locally finite,
reflexive and non-degenerate function $w:X\times X\to\mathbb{R}_{+}$
which satisfies the full triangle inequality. \end{cor}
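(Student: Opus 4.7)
The plan is to derive Corollary 2.3 as a direct consequence of Lemma 2.2 by verifying that the two axiomatizations are interderivable. Since the corollary is an equivalence of definitions, I would organize the argument into two implications.

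For the forward direction, assume $(X,w)$ is a metric space in the classical sense: $w$ is locally finite, reflexive, non-degenerate, symmetric, and satisfies the restricted triangle inequality. The first three properties are kept, so it suffices to deduce the full triangle inequality. But reflexivity and the restricted triangle inequality are precisely the hypotheses of Lemma 2.2, and by that lemma symmetry implies the full triangle inequality, which is what we need.

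For the reverse direction, assume $(X,w)$ is locally finite, reflexive, non-degenerate, and satisfies the full triangle inequality. We must recover symmetry and the restricted triangle inequality. The restricted triangle inequality is immediate, as it is literally one of the two inequalities packaged inside the full triangle inequality. With reflexivity and the restricted triangle inequality now in hand, Lemma 2.2 applies again and tells us that the full triangle inequality implies symmetry. Hence $(X,w)$ is a metric space in the classical sense.

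There is no genuine obstacle here; the only thing to be careful about is the logical shape of Lemma 2.2. It is stated as an equivalence under the standing hypotheses of reflexivity and restricted triangle inequality, so both directions of the corollary are obtained by first arranging these standing hypotheses (trivial in each case) and then invoking the appropriate half of the equivalence. A minor stylistic point worth noting in the writeup is that the conventions about $\infty$ from Remark 2.1 need no further comment, since the extraction of the restricted inequality from the full one and the application of Lemma 2.2 respect those conventions as stated.
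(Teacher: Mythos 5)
Your argument is correct and matches the paper's route exactly: the corollary is obtained as an immediate consequence of Lemma \ref{lem:rest plus ref is full}, noting in the reverse direction that the restricted triangle inequality is contained in the full one so the lemma's standing hypotheses hold. The paper leaves this verification implicit; your write-up simply spells it out.
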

\begin{rem}
Our choice of using $w:X\times X\to\mathbb{R}_{+}$ instead of the
more common $d:X\times X\to\mathbb{R}_{+}$ is meant to distantiate
the exposition from the deeply grained prejudices suggested by the
word 'distance'. The reader may thus think instead of the word 'weight'. 
\end{rem}
With this result we achieved our preliminary goal of a symmetry-free
axiomatization of metric spaces. As a by product notice that, as alluded
to in Section \ref{sec:Introduction}, this axiomatization reflects
the open/closed duality of topology in a rather straightforward manner:
Each half of the full triangle inequality corresponds to either open
balls being open sets or closed balls being closed sets. 
\begin{rem}
\label{Rem:As-Michael-Lockyer}As Michael Lockyer pointed out to the
author, the axiom of symmetry can also be replaced, in the presence
of reflexivity, by the strong triangle inequality: $d(x,z)\le d(x,y)+d(z,y)$.
Below we present a generalization of metric spaces based on the axiomatization
in Corollary \ref{cor:(New,-and-Equivalent,}. It is also possible
to develop such a generalization based on the strong triangle inequality.
Such a generalization is already captured by the work below, as explained
in Remark \ref{Rem: Michael} below. 
\end{rem}

\section{\label{sec:begining section weakening}Weakening the new definition\label{sec:Weakening-the-new}}

The main point of the previous section was that we arrived at an axiomatization
of classical metric spaces which makes no mention of symmetry. We
are now free to seek out a generalization from a fresh point of view
on a very old subject.

Disposing of local finiteness is most easily justified. The immediate
benefit of doing so is the existence of the coproduct of two metric
spaces (and in fact any small colimit). In fact, the local finiteness
axiom is already starting to disappear from textbooks on metric spaces,
(see, e.g., the definition of metric space in \cite{MR1835418} and
the discussion following it). 

Disposing of non-degeneracy is also easy to digest. In the literature
the resulting structure is called a semi-metric space or a quasi-metric
space. We prefer the more descriptive use of the term 'degenerate'.
We refer again to page 2 of \cite{MR1835418} for more details. In
light of these facts we will use the term 'metric space' for spaces
that might be degenerate or not locally finite. 

We are thus left with reflexivity and the full triangle inequality
and we feel most unwilling to depart with any of these due to the
topological consequences of doing so. Reflexivity ensures that every
neighborhood of a point contains that point, and the full triangle
inequality guarantees that open balls are open sets and that closed
balls are closed sets. Interestingly, though not the path we take
below, it is useful to consider relaxing these axioms (see e.g., \cite{hitzler2000dislocated}
for relevance to electronic engineering, \cite{hitzler2001generalized}
for applications in programming semantics, \cite{abramsky1994domain}
in domain theory, and \cite{MR2572106} for uses in theoretical computer
science). 

We are interested in developing a theory that retains as much of the
character of topology as possible and thus insist on retaining both
reflexivity and the full triangle inequality. As we saw above, these
two axioms imply symmetry and thus there seems to be no escape from
symmetry. However, we now identify a hidden axiom in the axiomatization,
the weakening of which will allow us to proceed.

The axiom of univalence is the assumption that for every two points
$x,y\in X$ there is a unique associated number $w(x,y)$. This assumption
stems from the idea that $w(x,y)$ should signify the distance from
$x$ to $y$ and that this distance is uniquely determined by the
end points alone. However, when measuring a quantity from $x$ to
$y$ one may consider measuring different aspects as signified by
a parameter. It thus becomes natural to refine $w$ and allow it to
become multivalued. In fact it is sensible to allow $w(x,y)$ to attain
no value at all, for instance if it is not at all possible to measure
anything from $x$ to $y$. If a measurement is possible then there
could be a whole array of parameters indicating how one should measure.
Thus, we wish to replace the underlying set of a metric space by a
richer structure known as a category. 

A category (definition follows) can be seen as a $1$-dimensional
analogue of a set. In more detail, a category is a set (or a class)
of objects, thought of as $0$-dimensional point-like elements, together
with $1$-dimensional arrows between objects together with a notion
of composition of such arrows. 
\begin{defn}
A \emph{category} $\mathscr{C}$ consists of a class of objects $ob(\mathscr{C})$
and, for every two objects $x,y\in ob(\mathscr{C})$, a set $\mathscr{C}(x,y)$.
These sets are to be  disjoint, in the sense that if $\mathscr{C}(x,y)\cap\mathscr{C}(x',y')\ne\emptyset$
then $x=x'$ and $y=y'$. An element $\psi\in\mathscr{C}(x,y)$ is
also denoted by $\psi:x\to y$ and called an \emph{arrow }or a \emph{morphism}.
The object $x$ is the \emph{domain }of $\psi$ and the object $y$
is the \emph{codomain }of $\psi$\emph{. }For each object $x\in ob(\mathscr{C})$
there is a designated arrow $id_{x}:x\to x$ called the \emph{identity
}arrow at $x$.\emph{ }Lastly, there is a composition rule that assigns
to arrows $\xymatrix{x\ar[r]^{\psi} & y\ar[r]^{\varphi} & z}
$ their \emph{composition }$\varphi\circ\psi:x\to z$. With respect
to the composition, the identity arrows are required to be \emph{neutral}
in the sense that if $\psi:x\to y$ is any arrow then $\psi\circ id_{x}=\psi$
and $id_{y}\circ\psi=\psi$. The final condition is that the composition
be \emph{associative} in the sense that if $\xymatrix{x\ar[r]^{\psi} & y\ar[r]^{\varphi} & z\ar[r]^{\rho} & w}
$ are any three arrows then $\rho\circ(\varphi\circ\psi)=(\rho\circ\varphi)\circ\psi$.
The class of all arrows in the category $\mathscr{C}$ is denoted
by $Arr(\mathscr{C})$.\end{defn}
\begin{rem}
When considering general categories, size issues can become important.
Namely, if the class of arrows in a category is a proper class then
certain constructions are not guaranteed to exist. We adopt here the
common solution due to Grothendieck of assuming implicitly a hierarchy
of universes so that the class of arrows of a given category is small
with respect to some universe. From this point on we tacitly ignore
all size issues. 
\end{rem}
Examples of categories include the category $\mathbf{Set}$ with objects
all sets and arrows all functions, the category $\mathbf{Top}$ of
all topological spaces as objects and all continuous mappings as arrows,
the category $\mathbf{Grp}$ of all groups as objects and group homomorphisms
as arrows and so on. Another class of examples of categories useful
in what follows is the following one. Any set $S$ naturally gives
rise to a category $I_{S}$, called an \emph{indiscrete category},
where
\begin{itemize}
\item $ob(I_{S})=S$
\item $I_{S}(x,y)=\{\psi_{x,y}\}$
\end{itemize}
with $id_{x}=\psi_{x,x}$ and compositions determined uniquely. Via
this construction categories can be seen to extend sets. 

Structure preserving maps between categories are known as functors.
The formal definition is as follows. 
\begin{defn}
Let $\mathscr{C}$ and $\mathscr{D}$ be categories. A \emph{functor
}$F:\mathscr{C}\to\mathscr{D}$ consists of an assignment of an object
$F(c)\in ob(\mathscr{D})$ to any object $c\in ob(\mathscr{C})$ and
to every pair $c,c'\in ob(\mathscr{C})$, a function $F:\mathscr{C}(c,c')\to\mathscr{D}(F(c),F(c'))$
such that for every $c\in ob(\mathscr{C})$ holds that $F(id_{c})=id_{F(c)}$
and for every composable pair of arrows holds that $F(\varphi\circ\psi)=F(\varphi)\circ F(\psi)$.\end{defn}
\begin{rem}
Categories were introduced in \cite{MR0013131} by Eilenberg and Mac
Lane in 1945 not as generalizations of sets but rather in order to
make precise the illusive exact meaning of the naturality of certain
mathematical constructions (such as the natural isomorphism between
a finite dimensional vector space and its double dual), an effort
that proved crucial in advancing homology theory. Later, category
theory, found uses in algebraic geometry, computer science, and logic
just to mention a few areas. For more information on categories the
reader is referred to \cite{MR1712872}.
\end{rem}
We can now formulate the final step in the weakening of the axioms
of a metric space by removing the assumption of univalence.
\begin{defn}
A \emph{metric $1$-space} is a category $X$ together with, for every
two objects $x,y\in ob(X)$, a function $w:X(x,y)\to\mathbb{R}_{+}$
which satisfies \emph{reflexivity} and the \emph{full triangle inequality}
in the following sense. \end{defn}
\begin{itemize}
\item For every $x\in ob(X)$ the equality $w(id_{x})=0$ holds.
\item For every $x,y,z\in ob(X)$ and arrows $\psi:x\to y$ and $\varphi:y\to z$
the inequalities 
\[
|w(\varphi)-w(\psi)|\le w(\varphi\circ\psi)\le w(\varphi)+w(\psi)
\]
hold.
\end{itemize}
It is assumed that we follow the same convention set out in Remark
\ref{Remark:convention infty} above regarding computations involving
$\infty$. In particular, if $w(\varphi)=w(\psi)=\infty$ then the
full triangle inequality sets no lower bound on $w(\varphi\circ\psi)$.

We note immediately that every ordinary metric space $(S,d)$ gives
rise to a metric $1$-space structure on the indiscrete category $I_{S}$
by defining $w(\psi_{x,y})=d(x,y)$, for every arrow $\psi_{x,y}$
in $I_{S}$. Moreover, any metric structure on $I_{S}$ arises in
this way as we now show.
\begin{prop}
Let $X$ be a metric $1$-space and $\psi:x\to y$ and $\varphi:y\to z$
arrows in $X$. If $w(\varphi\circ\psi)=0$ then $w(\psi)=w(\varphi)$.\end{prop}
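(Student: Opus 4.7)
The plan is to read off the conclusion directly from the left-hand half of the full triangle inequality, with a small case split to handle the extended value $\infty$.

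First I would apply the full triangle inequality to the composable pair $\psi\colon x\to y$ and $\varphi\colon y\to z$ to obtain
\[
|w(\varphi)-w(\psi)|\le w(\varphi\circ\psi)=0.
\]
In the case where both $w(\varphi)$ and $w(\psi)$ are finite, this immediately forces $w(\varphi)=w(\psi)$, which is what we want. So the only remaining work is to rule out the mixed situation and to handle the doubly infinite one.

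Next I would dispose of the mixed case: suppose, without loss of generality, that $w(\psi)<\infty$ while $w(\varphi)=\infty$. Then, using the conventions of Remark \ref{Remark:convention infty}, $w(\varphi)-w(\psi)=\infty$ and $w(\psi)-w(\varphi)=-\infty$, so $|w(\varphi)-w(\psi)|=\infty$. The inequality $\infty\le 0$ is impossible, contradicting $w(\varphi\circ\psi)=0$. Hence this case cannot occur. Finally, in the doubly infinite case $w(\varphi)=w(\psi)=\infty$, equality $w(\varphi)=w(\psi)$ holds trivially, and this is consistent with $w(\varphi\circ\psi)=0$ because, by the stated convention, the full triangle inequality places no lower bound on $w(\varphi\circ\psi)$ in this situation.

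The argument is essentially a one-line consequence of the reverse triangle half of the axiom, so there is no real obstacle; the only subtlety, and thus the point that needs explicit attention, is keeping the arithmetic with $\infty$ honest so that the mixed case is genuinely excluded rather than silently permitted by the conventions.
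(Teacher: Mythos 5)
Your proof is correct and takes essentially the same route as the paper, whose entire proof is the one line $|w(\varphi)-w(\psi)|\le w(\varphi\circ\psi)=0$. The only difference is that you make the bookkeeping with $\infty$ explicit (mixed case impossible, doubly infinite case trivially giving equality), which the paper leaves implicit.
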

\begin{proof}
$|w(\varphi)-w(\psi)|\le w(\varphi\circ\psi)=0$.\end{proof}
\begin{cor}
\label{cor:wf is wfinv}If $\psi$ is an isomorphism (i.e., $\psi$
has an inverse) in a metric $1$-space then $w(\psi)=w(\psi^{-1})$.
\end{cor}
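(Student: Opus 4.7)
The plan is to derive this immediately from the preceding proposition by taking the composable pair to be $\psi$ together with its inverse $\psi^{-1}$. The key observation is that an isomorphism $\psi \colon x \to y$ satisfies $\psi^{-1} \circ \psi = id_x$, so the weight of this composition is pinned down by reflexivity.

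More concretely, I would first note that, since $\psi$ is an isomorphism, the arrow $\psi^{-1} \colon y \to x$ exists and composes with $\psi$ to give the identity $id_x$ in $X$. Reflexivity in the metric $1$-space axiom says that $w(id_x) = 0$, so $w(\psi^{-1} \circ \psi) = 0$. Now the pair $\psi \colon x \to y$ and $\psi^{-1} \colon y \to x$ is exactly the kind of composable pair to which the preceding proposition applies (with the roles of $\varphi$ and $\psi$ in the proposition played by $\psi^{-1}$ and $\psi$ respectively), so the proposition yields $w(\psi) = w(\psi^{-1})$.

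There is essentially no obstacle here: the proof is a one-line application of the proposition, and the only thing worth checking is that the $\infty$ conventions cause no trouble. If $w(\psi) = \infty$ then the inequality $|w(\psi^{-1}) - w(\psi)| \le w(\psi^{-1} \circ \psi) = 0$ forces $w(\psi^{-1}) = \infty$ as well under the convention $|{-\infty}| = \infty$ spelled out in Remark \ref{Remark:convention infty}, so the equality still holds, and symmetrically on the other side. Hence the corollary follows without any further work.
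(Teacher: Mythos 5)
Your proof is correct and is essentially the paper's own argument: the corollary is an immediate application of the preceding proposition to the composable pair $\psi$, $\psi^{-1}$, using $w(\psi^{-1}\circ\psi)=w(id_x)=0$ by reflexivity. The extra care you take with the $\infty$ conventions is harmless and consistent with Remark \ref{Remark:convention infty}.
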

Since in an indiscrete category $I_{S}$ every arrow is an isomorphism
we obtain
\begin{cor}
\label{cor:if ind then symmetric}If $X$ is a metric $1$-space with
an indiscrete underlying category $I_{S}$ then defining $d(x,y)=w(\psi_{x,y})$
defines a symmetric metric structure on $S$.
\end{cor}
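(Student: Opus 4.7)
The plan is to verify directly each defining property of a symmetric metric structure on $S$, exploiting the fact that in the indiscrete category $I_S$ the hom-sets $I_S(x,y)=\{\psi_{x,y}\}$ are singletons, so both composition and inversion are forced: one has $\psi_{y,z}\circ\psi_{x,y}=\psi_{x,z}$ and $\psi_{x,x}=id_x$, and every $\psi_{x,y}$ is an isomorphism with inverse $\psi_{y,x}$.

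First I would establish reflexivity: since $\psi_{x,x}=id_x$ by definition of $I_S$, the reflexivity axiom for the metric $1$-space gives $d(x,x)=w(\psi_{x,x})=w(id_x)=0$. Next I would verify the full triangle inequality on $d$: for any $x,y,z\in S$, the composition in $I_S$ forces $\psi_{y,z}\circ\psi_{x,y}=\psi_{x,z}$, so the full triangle inequality for $w$ applied to the composable pair $\psi_{x,y},\psi_{y,z}$ yields
\[
|w(\psi_{y,z})-w(\psi_{x,y})|\le w(\psi_{x,z})\le w(\psi_{x,y})+w(\psi_{y,z}),
\]
which is precisely $|d(y,z)-d(x,y)|\le d(x,z)\le d(x,y)+d(y,z)$.

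Finally, symmetry follows from Corollary \ref{cor:wf is wfinv} applied to the isomorphism $\psi_{x,y}$, whose inverse is $\psi_{y,x}$, giving $d(x,y)=w(\psi_{x,y})=w(\psi_{y,x})=d(y,x)$.

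No step here is really an obstacle; the whole argument is a direct bookkeeping check once the identifications $\psi_{x,x}=id_x$, $\psi_{y,z}\circ\psi_{x,y}=\psi_{x,z}$, and $\psi_{x,y}^{-1}=\psi_{y,x}$ in $I_S$ are made explicit. The only mild subtlety worth flagging is terminological: as the paper uses "metric space" for structures which may be degenerate or not locally finite, the conclusion is understood accordingly, so neither non-degeneracy nor finiteness of $d$ is part of what must be verified.
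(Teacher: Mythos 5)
Your proof is correct and follows the paper's own route: reflexivity and the full triangle inequality are inherited directly because composition in $I_S$ is forced, and symmetry comes from Corollary \ref{cor:wf is wfinv} applied to the isomorphism $\psi_{x,y}$ with inverse $\psi_{y,x}$, which is exactly how the paper derives the corollary. Your closing remark about the paper's relaxed use of ``metric space'' (allowing degeneracy and non-local-finiteness) is also consistent with the conventions set in Section \ref{sec:begining section weakening}.
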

We thus see that metric spaces can be identified with metric $1$-spaces
having an indiscrete underlying category. 
\begin{example}
\label{Exam:BiLip}In the context of ordinary metric spaces recall
that a function $f:X\to Y$ between metric spaces is called bi-Lipschitz
if there exists a constant $C$, called a bi-Lipschitz constant, such
that 
\[
\frac{1}{C}d(x,x')\le d(f(x),f(x'))\le Cd(x,x')
\]
holds for all $x,x'\in X$ (note that such a $C$, if it exists, satisfies
$C\ge1$). Consider now the category $\mathbf{BiLip}$ of all metric
spaces and bi-Lipschitz mappings between them. For each arrow $f:X\to Y$
in that category let $C_{f}$ be the infimum over all bi-Lipschitz
constants for $f$ and let $w(f)=\ln(C_{f})$. It is straightforward
to verify that this choice of $w$ turns $\mathbf{BiLip}$ into a
metric $1$-space. 
\end{example}
Recall, from \cite{lawvere1973metric}, that a Lawvere space is a
set $X$ equipped with a reflexive function $d:X\times X\to\mathbb{R}_{+}$
satisfying the restricted triangle inequality. Given any metric $1$-space
$(X,w)$ one may define a Lawvere structure on $S=ob(X)$ by the formula
\[
d(x,y)=\inf_{\psi:x\to y}w(\psi)
\]
for any two objects $x,y\in ob(X)$. We denote this Lawvere space
by $L(X)$ and note that it would usually fail to be a metric space
since symmetry would not generally hold.
\begin{example}
Continuing Example \ref{Exam:BiLip}, it is easily seen that in $L(\mathbf{BiLip})$
the distance $d(X,Y)$ is the usual Lipschitz distance between $X$
and $Y$. 
\end{example}
Before embarking on the study of metric $1$-spaces we close this
section by mentioning the concept of categorical duality. Given a
category $\mathscr{C}$ one may construct a new category, denoted
$\mathscr{C}^{op}$, and called the \emph{opposite category}, by formally
reversing the directions of all arrows in $\mathscr{C}$. More concretely,
$ob(\mathscr{C}^{op})=ob(\mathscr{C})$ and for every arrow $\psi:c\to d$
in $\mathscr{C}$ there is an arrow $\psi^{op}:d\to c$ in $\mathscr{C}^{op}$.
It follows that every result about categories can be dualized to give
another true result. This observation will be used repeatedly below
and we refer the reader to \cite{MR1712872} for more details on duality.
We do comment that for most categories $\mathscr{C}$ the opposite
category $\mathscr{C}^{op}$ is very different than the original category.
For instance, the opposite of the category $\mathbf{Set}$ of sets
and functions is essentially the same as the category of complete
atomic boolean algebras. Duality can also be used to define new objects
of study. For instance, non-commutative geometry \emph{defines }its
objects of study to be the objects in the opposite of a category of
algebras.

\section{\label{sec:The-coarse-structure}The coarse structure of a metric
$1$-space}

The main aim of this work is to investigate the fine structure of
a metric $1$-space. In this short section we take a detour to consider
the coarse structure of metric $1$-spaces as well. The further study
of the coarse structure of metric $1$-spaces is postponed to a future
article. 

If $E,E_{1},E_{2}$ are relations on a fixed set $X$ then recall
that $E_{1}\circ E_{2}=\{(x,z)\in X\times X\mid\exists y\in X\,\quad\,(x,y)\in E_{1},(y,z)\in E_{2}\}$
and that $E^{-1}=\{(y,x)\in X\times X\mid(x,y)\in E\}$. Recall (\cite{MR2007488})
that a \emph{coarse structure }on a set $X$ is a collection $\mathcal{E}=\{E_{i}\}_{i\in I}$,
whose elements are called \emph{controlled sets}, where each $E_{i}$
is a relation $E_{i}\subseteq X\times X$, such that the following
axioms are satisfied.
\begin{itemize}
\item Reflexivity: The diagonal $\Delta=\{(x,x)\mid x\in X\}$ is in $\mathcal{E}$.
\item Downward Saturation: If $E_{i}\in\mathcal{E}$ and $F\subseteq E_{i}$
then $F\in\mathcal{E}$.
\item Upward Saturation: $\mathcal{E}$ is closed under taking finite unions. 
\item Composition Stability: if $E_{1},E_{2}\in\mathcal{E}$ then $E_{1}\circ E_{2}\in\mathcal{E}$. 
\item Symmetry: if $E\in\mathcal{E}$ then $E^{-1}\in\mathcal{E}$. 
\end{itemize}
A \emph{coarse space }is then a set $X$ together with a course structure
$\mathcal{E}$ on it. The archetypical example of a coarse space is
the \emph{bounded coarse structure} associated to an ordinary metric
space $(X,d)$ where the controlled sets are all subsets $E\subseteq X\times X$
such that $\sup\{d(x,y)\mid(x,y)\in E\}$ is finite. To adapt this
definition to the setting of metric $1$-spaces we first reformulate
the definition of coarse space to obtain an equivalent axiomatization
that does not mention symmetry. The steps we take are analogous to
those taken above on the way to the symmetry-free axiomatization of
metric space. 

Given a relation $E\subseteq X\times X$, let $E^{\star}$ be the
union of the sets

\[
\{(y,z)\in X\times X\mid\exists x\in X\quad(x,y),(x,z)\in E)\}
\]
and 
\[
\{(x,y)\in X\times X\mid\exists z\in X\quad(x,z),(y,z)\in E\}.
\]
We now obtain the following coarse version of Lemma \ref{lem:rest plus ref is full}.
\begin{lem}
If $\mathcal{E}$ is a collection of relations on a set $X$ that
satisfies reflexivity, upward and downward saturation, and composition
stability then $\mathcal{E}$ is a coarse structure if, and only if,
$\mathcal{E}$ is $\star$ closed (i.e., if $E\in\mathcal{E}$ then
$E^{\star}\in\mathcal{E}$).\end{lem}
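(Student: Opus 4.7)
The plan begins with a reformulation: reading the two sets that define $E^{\star}$ against the composition convention $E_{1}\circ E_{2}=\{(x,z)\mid\exists y,(x,y)\in E_{1},(y,z)\in E_{2}\}$, one sees that the first is $E^{-1}\circ E$ and the second is $E\circ E^{-1}$. Hence
\[
E^{\star}=(E^{-1}\circ E)\cup(E\circ E^{-1}).
\]
With this translation in hand the lemma reduces to manipulating compositions, reflexivity, and saturation.

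For the forward implication, suppose $\mathcal{E}$ is a coarse structure and let $E\in\mathcal{E}$. By symmetry $E^{-1}\in\mathcal{E}$; by composition stability both $E^{-1}\circ E$ and $E\circ E^{-1}$ belong to $\mathcal{E}$; and by upward saturation their union $E^{\star}$ belongs to $\mathcal{E}$. This direction is essentially a bookkeeping exercise once the identification above is in place.

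For the reverse implication we must extract the axiom of symmetry from $\star$-closure alone. The naive attempt of forming $E^{\star}$ directly fails, since in general $E^{\star}$ contains neither $E$ nor $E^{-1}$ (already for a single off-diagonal pair $E=\{(a,b)\}$ one has $E^{\star}=\{(a,a),(b,b)\}$). The resolution is to pad $E$ with the diagonal: set $F=E\cup\Delta$, which lies in $\mathcal{E}$ by reflexivity and upward saturation, so $F^{\star}\in\mathcal{E}$ by hypothesis. For each $(x,y)\in E$, using $(x,x)\in\Delta\subseteq F$ and $(x,y)\in F$ with common left coordinate $x$ places $(y,x)$ in $F^{-1}\circ F\subseteq F^{\star}$. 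Thus $E^{-1}\subseteq F^{\star}$, and downward saturation yields $E^{-1}\in\mathcal{E}$, establishing symmetry.

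The main conceptual obstacle is recognizing that $\star$-closure by itself is too weak to deliver symmetry, and that one must first absorb the diagonal into $E$ before applying $\star$; this is the crucial step where reflexivity enters and it is what forces all three of the auxiliary axioms (reflexivity, both saturations, and the two-piece nature of $E^{\star}$) to be used together.
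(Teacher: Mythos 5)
Your proof is correct and follows essentially the same route as the paper: the forward direction uses the identification of $E^{\star}$ with $(E^{-1}\circ E)\cup(E\circ E^{-1})$ together with symmetry, composition stability, and saturation, and the reverse direction recovers symmetry by padding with the diagonal, forming $F=E\cup\Delta$ and noting $E^{-1}\subseteq F^{\star}$ before applying downward saturation, exactly as in the paper. Your explicit counterexample $E=\{(a,b)\}$ motivating the diagonal trick is a nice addition but does not change the argument.
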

\begin{proof}
We need to show that, under the given assumptions, symmetry is satisfied
if, and only if, $\mathcal{E}$ is $\star$ closed. If symmetry holds
then noting that for every $E\in\mathcal{E}$ holds that $E^{\star}\subseteq E\circ E^{-1}\cup E^{-1}\circ E$,
shows that $\mathcal{E}$ is $\star$ closed. Conversely, if $\mathcal{E}$
is $\star$ closed then, given $E\in\mathcal{E}$, form first $E_{0}=\Delta\cup E$
and then note that $E^{-1}\subseteq E_{0}^{\star}$, to finish the
proof. 
\end{proof}
Thus, a coarse space can equivalently be defined as a collection $\mathcal{E}$
of relations on a set $X$ which satisfies reflexivity, upward and
downward saturation, composition stability, and $\star$ stability.
It is this formulation that is the appropriate one to generalize to
metric $1$-spaces. Given $E,E_{1},E_{2}$, sets of arrows in a fixed
category, we write 
\[
E_{1}\circ E_{2}=\{\psi_{1}\circ\psi_{2}\mid\psi_{1}\in E_{1},\psi_{2}\in E_{2}\}
\]
 and 
\[
E^{\star}=\{\psi\in Arr(\mathscr{C})\mid\exists\varphi\in E\quad\psi\circ\varphi\in E\}\cup\{\psi\in Arr(\mathscr{C})\mid\exists\varphi\in E\quad\varphi\circ\psi\in E\}.
\]

\begin{defn}
A \emph{coarse structure }on a category $\mathscr{C}$ is a collection
$\mathcal{E}=\{E_{i}\}_{i\in I}$, where each $E_{i}$ is a set of
arrows in $\mathscr{C}$, called a \emph{controlled set}, such that
the following axioms hold.\end{defn}
\begin{itemize}
\item Reflexivity: The set $\Delta=\{id_{x}:x\to x\mid x\in ob(\mathscr{C})\}$
is in $\mathcal{E}$. 
\item Downward Saturation: If $E\in\mathcal{E}$ and $F\subseteq E$ then
$F\in\mathcal{E}$.
\item Upward Saturation: $\mathcal{E}$ is closed under taking finite unions. 
\item Composition Stability: If $E_{1},E_{2}\in\mathcal{E}$ then $E_{1}\circ E_{2}\in\mathcal{E}$. 
\item $\star$ Stability: If $E\in\mathcal{E}$ then $E^{\star}\in\mathcal{E}$.
\end{itemize}
A \emph{coarse $1$-space }is a category $X$ together with a coarse
structure on it. One can easily show that given a metric $1$-space
$X$, defining $\mathcal{E}$ to consist of all sets $E$ of arrows
in $X$ such that $\sup\{w(\psi)\mid\psi\in E\}$ is finite endows
the underlying category $X$ with a coarse structure which is called
the \emph{bounded coarse structure }of the metric $1$-space $X$. 

It is evident that coarse $1$-spaces can be seen as an extension
of coarse spaces via the construction of indiscrete categories analogously
to the case of metric $1$-spaces described above.

We close this section by proving a metrizability theorem for coarse
$1$-spaces. A coarse $1$-space is \emph{metrizable }if it is the
bounded coarse structure of some metric $1$-space. A \emph{generating
set }for a coarse $1$-space $(X,\mathcal{E})$ is a collection $\{E_{j}\}_{j\in J}\subseteq\mathcal{E}$
such that every controlled set $E\in\mathcal{E}$ is contained in
some $E_{i}$. 
\begin{thm}
A coarse $1$-space $(X,\mathcal{E})$ is metrizable if, and only
if, it admits a countable generating set.\end{thm}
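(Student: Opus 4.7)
The plan is to prove the two directions separately, with the real work in the `if' direction.

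For the `only if' direction, assume $(X,\mathcal{E})$ is the bounded coarse structure of a metric $1$-space $(X,w)$. Setting $E_{n}=\{\psi\in Arr(X):w(\psi)\le n\}$ for each $n\in\mathbb{N}$, each $E_{n}\in\mathcal{E}$ since $\sup_{E_n}w\le n<\infty$, and any controlled $E$ has $M=\sup_{E}w<\infty$, hence $E\subseteq E_{\lceil M\rceil}$. So $\{E_{n}\}_{n\in\mathbb{N}}$ is a countable generating set.

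For the converse, given a countable generating family $\{E_n\}_{n\ge 1}$, I would first replace it with a nested, well-behaved chain $\Delta = F_{0}\subseteq F_{1}\subseteq F_{2}\subseteq\cdots$ of controlled sets cofinal in $\mathcal{E}$, by setting $F_{0}=\Delta$ and, inductively,
\[
F_{n}\ =\ F_{n-1}\cup E_{n}\cup F_{n-1}^{\star}\cup\bigcup_{i+j=n,\ i,j\ge 0}F_{i}\circ F_{j}.
\]
The coarse-space axioms (finite unions, composition stability, $\star$-stability, downward saturation) ensure each $F_{n}\in\mathcal{E}$. The construction delivers $F_{i}\circ F_{j}\subseteq F_{i+j}$, $F_{n}^{\star}\subseteq F_{n+1}$, and cofinality of $\{F_n\}$ in $\mathcal{E}$. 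Then define $w\colon Arr(X)\to\mathbb{R}_{+}$ by $w(\psi)=\inf\{n:\psi\in F_{n}\}$ (with $w(\psi)=\infty$ otherwise). Reflexivity $w(id_{x})=0$ is immediate from $id_x\in F_0$, and the restricted triangle inequality $w(\varphi\circ\psi)\le w(\varphi)+w(\psi)$ follows from $F_{i}\circ F_{j}\subseteq F_{i+j}$. That the bounded coarse structure of $(X,w)$ coincides with $\mathcal{E}$ is then routine: a set is $w$-bounded by $N$ iff it is contained in $F_{N}$, and the $F_{N}$ are cofinal in $\mathcal{E}$.

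The main obstacle is the left half of the full triangle inequality, $w(\varphi)\le w(\psi)+w(\varphi\circ\psi)$. A direct application of $\star$-stability yields only $w(\varphi)\le\max(w(\psi),w(\varphi\circ\psi))+1$, because a single $\star$-step inflates the $F$-index by one. To absorb this $+1$, I expect to replace the naive formula above by a path-length infimum
\[
w(\psi)\ =\ \inf\Big\{\textstyle\sum_{i=1}^{k} b(\psi_{i})\ :\ \psi=\psi_{k}\circ\cdots\circ\psi_{1}\Big\},\qquad b(\psi)=\inf\{n:\psi\in F_{n}\},
\]
which is automatically subadditive under composition, and then to extract the left half by juxtaposing near-optimal decompositions of $\psi$ and $\varphi\circ\psi$ and peeling off the factors of $\psi$ one at a time using $\star$-stability. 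An equivalent route is to refine the indexing of $\{F_{n}\}$ to a dense subset of $\mathbb{R}_{\ge 0}$ so that the $+1$ distributes over arbitrarily fine steps and disappears in the limit. Verifying that one of these refinements gives the exact full triangle inequality while preserving the bounded coarse structure is the principal technical point of the argument.
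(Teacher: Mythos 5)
Your ``only if'' direction coincides with the paper's (the sets $E_{n}=\{\psi\in Arr(X)\mid w(\psi)\le n\}$), and in the converse direction your nested chain $F_{n}$, the weight $w(\psi)=\inf\{n\mid\psi\in F_{n}\}$, reflexivity, the restricted triangle inequality via $F_{i}\circ F_{j}\subseteq F_{i+j}$, and the identification of the bounded coarse structure with $\mathcal{E}$ are all essentially the paper's construction. The gap is exactly where you flag it: the left half of the full triangle inequality is never proved, and the two escape routes you sketch (a path-length infimum, or reindexing over a dense set of scales) are left unverified, so the argument is incomplete at what you yourself call its principal technical point.

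The missing idea is much simpler than either detour. By construction $w(\psi)=0$ if and only if $\psi$ is an identity, so every non-identity arrow has integer weight at least $1$. The $\star$-step does give only $w(\psi)\le\max\bigl(w(\varphi),w(\varphi\circ\psi)\bigr)+1$ (from $\varphi,\varphi\circ\psi\in F_{m}$, hence $\psi\in F_{m}^{\star}\subseteq F_{m+1}$), but since $\max(a,b)+1\le a+b$ whenever $a,b\ge1$, the $+1$ is absorbed for non-identity arrows, and the cases where $\varphi$ or $\psi$ is an identity are trivial because composing with an identity does not change the other arrow. This is precisely how the paper closes the proof; no path metric or dense reindexing is needed, and the dense reindexing would in fact destroy the unit gaps between weight values that make the absorption work. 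The one residual case worth an explicit word (it is also passed over quickly in the paper) is $\varphi\circ\psi$ an identity with $\varphi,\psi$ non-identities, where the $\star$-argument only yields $|w(\varphi)-w(\psi)|\le1$ rather than $0$.
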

\begin{proof}
If $X$ is metrizable then  defining, for each $n\ge0$, the set $E_{n}=\{\psi\in Arr(X)\mid w(\psi)\le n\}$
gives a countable generating set for the bounded coarse structure.
To prove the converse assume a countable generating set $\{E_{n}\}_{n=0}^{\infty}$
is given, and define the sets $F_{0}=\Delta$ and $F_{n+1}=F_{n}^{\star}\cup F_{n}\circ F_{n}\cup E_{n}\cup E_{n}^{\star}$,
for each $n\ge0$. Note that in general, if $\Delta\subseteq E$ then
$E\subseteq E^{\star}$ and thus it follows that $F_{n}\subseteq F_{n+1}$
for every $n\ge0$. Clearly, the set $\{F_{n}\}_{n=0}^{\infty}$ is
a generating set for the coarse structure $\mathcal{E}$. Now, for
each arrow $\psi\in Arr(X)$, define $w(\psi)=\inf\{n\in\mathbb{N}\mid\psi\in F_{n}\}$.
Note that $w(\psi)=0$ if, and only if, $\psi$ is an identity arrow.
To verify the full triangle inequality it suffices to only consider
non-identity arrows $\varphi$ and $\psi$. To that end assume that
$w(\varphi)=n\le m=w(\psi)$. Then $\varphi,\psi\in F_{m}$ and so,
by construction, $\varphi\circ\psi\in F_{m+1}$ proving that $w(\varphi\circ\psi)\le m+1\le m+n$.
This, and a similar argument, establish the restricted triangle inequality.
The full triangle inequality follows by considering cases such as:
$w(\varphi\circ\psi)=k\le m=w(\varphi)$, from which follows that
$\varphi\circ\psi,\varphi\in F_{m}$ and so $\psi\in F_{m+1}$, proving
that $w(\psi)\le m+1\le m+k$. We thus obtain the metric $1$-space
$(X,w)$ and it is trivial to verify that the bounded coarse structure
for this metric $1$-space coincides with $(X,\mathcal{E})$. 
\end{proof}

\section{\label{sec:convergence}convergence}

The notions of convergence introduced in this section are a fusion
of concepts from the theory of limits in ordinary metric spaces and
in categories. Since a category is a more elaborate structure than
a mere set there are different ways to proceed. We present here two
notions of convergence in metric $1$-spaces which will be used below
to prove some fundamental results on metric $1$-spaces. The comparison
with the notion of topological limit will be quite self evident. It
is also interesting to compare and contrast with the categorical notion
of a (co)limit and thus we first consider categorical (co)limits of
a special kind. Due to categorical duality all concepts of category
theory come in pairs, commonly prefixed by 'left' and 'right' with
the prefix 'co' used to signify an application of duality. As a result,
the notions of sequence and series we introduce have duals as well.
This is in accordance with current literature on quasi-metric spaces
(e.g., \cite{rutten1998weighted}) where convergence and all other
topological notions appear in two variants. These are usually called
forward and backward (e.g., forward/backward convergence, forward/backward
Cauchy etc.) and this is the terminology we adopt. 
\begin{rem*}
We point out that in \cite{rutten1998weighted} it is shown that limits
in Lawvere spaces are related to the notion of weighted limits in
enriched category theory. Our presentation below does not involve
enrichment. 
\end{rem*}

\subsection{Sequences and series}

Let $\mathbb{N}_{\bullet}$ be the category whose objects are the
natural numbers together with an object $\bullet$ such that, besides
the identity arrows, there is in the category, for every natural number
$n$, precisely one arrow from $\bullet$ to $n$. The category $\mathbb{N}_{\bullet}$
can be depicted diagrammatically as

\[
\xymatrix{ & \bullet\ar[dl]\ar[d]\ar[drr]^{\quad\quad\cdots}\\
0 & 1 & \cdots & n & \cdots
}
\]
Dually, let $\mathbb{N}^{\bullet}=\mathbb{N}_{\bullet}^{op}$ be the
opposite category whose diagram is
\[
\xymatrix{0\ar[dr] & 1\ar[d] & \cdots & n\ar[dll]^{\quad\quad\cdots} & \cdots\\
 & \bullet
}
\]

\begin{defn}
Let $\mathscr{C}$ be a category. A \emph{forward sequence }in the
category $\mathscr{C}$ is a functor $\mathbb{N}_{\bullet}\to\mathscr{C}$.
A \emph{backward sequence }in the category $\mathscr{C}$ is a functor
$\mathbb{N}^{\bullet}\to\mathscr{C}$. 
\end{defn}
Clearly a forward sequence $\mathbb{N}_{\bullet}\to\mathscr{C}$ amounts
to specifying a family $\{\psi_{n}:c\to c_{n}\}_{n=0}^{\infty}$ of
arrows in $\mathscr{C}$ with a common domain. If we wish to make
the domain explicit we will speak of a forward sequence \emph{from
an object $c$}. Similarly, a backward sequence $\mathbb{N}^{\bullet}\to\mathscr{C}$
consists of a family $\{\psi_{n}:c_{n}\to c\}_{n=0}^{\infty}$ of
arrows in $\mathscr{C}$ with a common codomain which can be made
explicit by speaking of a backward sequence \emph{to an object $c$. }

Let $\mathbb{N}_{\rightarrow}$ be the category whose objects are
the natural numbers and, besides the identity arrows, there is an
arrow from $n$ to $m$ if, and only if, $n<m$. Dually, let $\mathbb{N}_{\leftarrow}=\mathbb{N}_{\rightarrow}^{op}$
be the opposite category. The respective diagrams of these categories
are
\[
\xymatrix{0\ar[r] & 1\ar[r] & \cdots\ar[r] & n\ar[r] & \cdots}
\]
and
\[
\xymatrix{\cdots\ar[r] & n\ar[r] & \cdots\ar[r] & 1\ar[r] & 0}
\]
(the identities and compositions are omitted from the diagrams). 
\begin{defn}
Let $\mathscr{C}$ be a category. A \emph{forward series }in the category
$\mathscr{C}$ is a functor $\mathbb{N}_{\rightarrow}\longrightarrow\mathscr{C}$.
A \emph{backward series }in the category $\mathscr{C}$ is a functor
$\mathbb{N}_{\leftarrow}\longrightarrow\mathscr{C}$. 
\end{defn}
Clearly, a forward series amounts to a sequence of arrows $\{\psi_{n}\}_{n=0}^{\infty}$
such that for each $n\ge0$ the domain of $\psi_{n+1}$ is equal to
the codomain of $\psi_{n}$. In other words, a sequence of arrows
$\{\psi_{n}\}_{n=0}^{\infty}$ is a forward series if, and only if,
for each $n\ge0$ the composition $\psi_{n}\circ\psi_{n-1}\circ\cdots\circ\psi_{0}$
exists. Similarly, a sequence $\{\psi_{n}\}_{n=0}^{\infty}$ of arrows
is a backward series precisely when for each $n\ge0$ the composition
$\psi_{0}\circ\cdots\circ\psi_{n-1}\circ\psi_{n}$ exists. 

Our notation is meant to resonate with the familiar concepts of sequences
and series in, e.g., $\mathbb{R}$. However, there are marked differences
which show up below. For instance, the two notions are generally not
interchangeable. Series can be related to sequences by the evident
construction of partial compositions as follows. Let $\{\psi_{n}\}_{n=0}^{\infty}$
be a forward series in $\mathscr{C}$. Define for each $n\ge0$ the
arrow $\varphi_{n}=\psi_{n}\circ\cdots\circ\psi_{0}$. The resulting
sequence $\{\varphi_{n}\}_{n=0}^{\infty}$ is called the forward sequence
of \emph{partial compositions }associated to the forward series $\{\psi_{n}\}_{n=0}^{\infty}$.
Similarly, one can associate to a backward series a backward sequence.
However, it will be evident below that convergence of the forward
(respectively backward) series is generally stronger than convergence
of the associated forward (respectively backward) sequence of partial
compositions. It is also evident that not every forward (respectively
backward) sequence can so be obtained from a forward (respectively
backward) series.

\subsection{Pushouts, Pullbacks, and transfinite compositions}

Before presenting the definitions of limits for the concepts introduced
above we define, for the sake of completeness, categorical limits
and colimits of sequences and series first. 
\begin{defn}
Let $\mathscr{C}$ be a category and consider a forward sequence $\mathbb{N}_{\bullet}\to\mathscr{C}$
represented by the solid arrows from the object $c$ in the diagram
\[
\xymatrix{ &  & c\ar[dll]\ar[dl]\ar[dr]\\
c_{0}\ar[ddr]\ar@{..>}[ddrrr] & c_{1}\ar[dd]\ar@{..>}[ddrr] & \cdots & c_{n}\ar[ddll]\ar@{..>}[dd] & \cdots\\
 &  &  &  & \cdots\\
 & d\ar@{-->}[rr] &  & d'
}
\]
A \emph{cone }over the forward sequence is a sequence of arrows to
an object $d$ such that the diagram of solid arrows commutes. Such
a cone is called a \emph{weak pushout }of the forward sequence if
given any other cone (over the same forward sequence), represented
by the dashed arrows to $d'$, there exists a \emph{mediating }arrow
from $d$ to $d'$ that makes the entire diagram commute. A \emph{pushout
}is then a weak pushout such that each such mediating arrow is unique. 
\end{defn}
It can easily be shown that a pushout of a forward sequence, if it
exists, is unique up to an isomorphism.

The dual notion is that of a pullback of a backward sequence, obtained
by formally reversing all the arrows in the above definition.
\begin{defn}
\label{Def:categorical tran comp}Let $\mathscr{C}$ be a category
and $\{\psi_{n}:x_{n}\to x_{n+1}\}_{n=0}^{\infty}$ a series of arrows
in it. A \emph{categorical weak transfinite composition }of the series
is an object $x_{\infty}\in ob(\mathscr{C})$ together with arrows
$\mu_{n}:x_{n}\to x_{\infty}$ forming the solid commutative diagram
\[
\xymatrix{x_{0}\ar[r]^{\psi_{0}}\ar[ddrr]_{\mu_{0}}\ar@{..>}[ddddrr]_{\forall\iota_{0}} & x_{1}\ar[r]^{\psi_{1}}\ar[ddr]^{\mu_{1}}\ar@{..>}[ddddr]_{\forall\iota_{1}} & \cdots\ar[r]^{\psi_{n-1}}\ar@{}[dd]|{\cdots} & x_{n}\ar[ddl]_{\mu_{n}}\ar[r]^{\psi_{n}}\ar@{..>}[ddddl]^{\forall\iota_{n}} & \cdots\ar@{..>}[ddddll]^{\cdots}\ar[ddll]\\
\\
 &  & x_{\infty}\ar[dd]|{\exists\varphi}\\
\\
 &  & x
}
\]
with the property that if $x\in ob(\mathscr{C})$ is any object and
the dotted arrows $\iota_{n}:x_{n}\to x$ are any arrows that form
a commutative diagram with the given series $\{\psi_{n}\}_{n=0}^{\infty}$
then there exists a mediating arrow $\varphi:x_{\infty}\to x$ such
that $\iota_{k}=\varphi\circ\mu_{k}$ for all $k\ge0$. A \emph{categorical
transfinite composition }is then a weak categorical transfinite composition
for which each such mediating arrow is unique. 
\end{defn}
Again, the dual notion of (weak) categorical transfinite composition
of a backward series is obtained by formally reversing all arrows
and again we omit the details. We remark that the notions of transfinite
composition, pushout, and pullback are special cases of general categorical
colimits and limits (see, e.g., \cite{MR1712872}).

\subsection{Limits of sequences }

Note that a (weak) pushout of a forward sequence $\{\psi_{n}:c\to c_{n}\}_{n=0}^{\infty}$
is highly sensitive to finite changes in the sequence. Indeed, changing
just one of the arrows in the sequence can change a sequence that
has a pushout to one that does not. Our definition of limits below
removes this sensitivity by adapting the definition of cone so that
any limit will essentially depend only on what happens 'towards the
end of the sequence'. 
\begin{defn}
\label{Def Forward Limiting arrow of sequence}Let $S:\mathbb{N}_{\bullet}\to X$
be a forward sequence in a metric $1$-space $X$, represented in
the diagram 
\[
\xymatrix{ &  & x\ar[dll]\ar[dl]\ar[dr]\ar[drr]\ar[drrrr]\\
x_{0} & x_{1} & \cdots & x_{m}\ar[ddl]^{\rho_{m}} & x_{m+1}\ar[ddll]^{\rho_{m+1}} & \cdots & x_{m+t}\ar[ddllll]^{\rho_{m+t}} & \cdots\\
 &  &  &  &  & \cdots\\
 &  & y
}
\]
by the arrows from the object $x$. An \emph{essential} \emph{cone
}over the forward sequence is given by arrows $\{\rho_{k}:x_{k}\to y\}_{k=m}^{\infty}$
to an object $y$ such that the diagram commutes. Such an essential
cone is called a \emph{forward limiting cone }of the forward sequence
if $\lim_{k\to\infty}w(\rho_{k})=0$. In that case, the arrow $x\to y$
(the common value of all compositions in the diagram) is called the
\emph{forward limiting arrow} associated to the forward limit and
is denoted, ambiguously, by $\lim_{n\to\infty}S$. 
\end{defn}
Applying categorical duality we obtain the definition of a \emph{backward
limiting arrow }$\lim_{n\to\infty}S$ of a backward sequence $S:\mathbb{N}^{\bullet}\to X$.
Once more, we omit the details. 

Addressing uniqueness requires the following notion. 
\begin{defn}
Consider two essential cones over the same forward sequence as in
the diagram (drawn from some index $m$ where both cones are defined)
\[
\xymatrix{ &  & x\ar[dll]\ar[dl]\ar[dr]\ar[drr]\ar[drrrr]\\
x_{0} & x_{1} & \cdots & x_{m}\ar[ddl]\ar@{..>}[ddr] & x_{m+1}\ar[ddll]\ar@{..>}[dd] & \cdots & x_{m+t}\ar[ddllll]\ar@{..>}[ddll] & \cdots\\
\\
 &  & y\ar@{-->}[rr] &  & y'
}
\]
A \emph{factorization }of the cone to $y'$ through the cone to $y$
is a dashed arrow, called a \emph{mediating arrow,} such that for
infinitely many values $l\in\mathbb{N}$ holds that the triangle
\[
\xymatrix{ & x_{l}\ar@{..>}[dr]\ar[dl]\\
y\ar@{-->}[rr] &  & y'
}
\]
commutes. If a factorization between two cones exists then we say
that they are \emph{compatible}. \end{defn}
\begin{lem}
\label{lem: uniqueness of sequence limit}Let $X$ be a metric $1$-space,
$S:\mathbb{N}_{\bullet}\to X$ a forward sequence from $x$, and $C_{\mu}$
and $C_{\nu}$ two forward limiting cones. If $y\to y'$ is a mediating
arrow from $C_{\mu}$ to $C_{\nu}$ then $w(y\to y')=0$. Dually,
mediating arrows between backward limiting cones of a backward sequence
have weight $0$.\end{lem}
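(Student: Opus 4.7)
The plan is to exploit the full triangle inequality directly on those arrows where the mediating triangle commutes, and then pass to the limit using the fact that both cones are forward limiting.

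First I would set up notation. Write $C_\mu = \{\mu_k : x_k \to y\}_{k \geq m}$ and $C_\nu = \{\nu_k : x_k \to y'\}_{k \geq m'}$ for the two forward limiting cones over $S$, so that $\lim_{k\to\infty} w(\mu_k) = 0$ and $\lim_{k\to\infty} w(\nu_k) = 0$ by Definition \ref{Def Forward Limiting arrow of sequence}. Let $\varphi : y \to y'$ be the mediating arrow; by the definition of factorization, there is an infinite set $L \subseteq \mathbb{N}$ of indices $l$ (with $l \geq \max(m, m')$) such that $\nu_l = \varphi \circ \mu_l$.

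The heart of the argument is the full triangle inequality applied to the composable pair $\mu_l : x_l \to y$ and $\varphi : y \to y'$. It gives
\[
|w(\varphi) - w(\mu_l)| \leq w(\varphi \circ \mu_l) = w(\nu_l),
\]
and hence $w(\varphi) \leq w(\mu_l) + w(\nu_l)$ for every $l \in L$. Since $L$ is infinite and both $w(\mu_l) \to 0$ and $w(\nu_l) \to 0$ as $l \to \infty$, taking the limit along $L$ yields $w(\varphi) \leq 0$, so $w(\varphi) = 0$.

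I do not expect any serious obstacle; the subtleties are merely bookkeeping ones. One should check that the triangle inequality is applied meaningfully even in the presence of $\infty$: because $\mu_l$ and $\nu_l$ have finite (in fact vanishing) weight for large $l \in L$, the expression $|w(\varphi) - w(\mu_l)|$ is well-defined, so the convention in Remark \ref{Remark:convention infty} causes no trouble. The dual statement for backward limiting cones follows by applying the same argument to the opposite metric $1$-space obtained by reversing all arrows, invoking the categorical duality discussed at the end of Section \ref{sec:begining section weakening}.
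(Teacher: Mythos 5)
Your proof is correct and is exactly the argument the paper has in mind: the paper dismisses the lemma as "a straightforward application of the full triangle inequality," and your write-up supplies precisely those details, including the harmless check on the $\infty$ convention and the dualization for the backward case.
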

\begin{proof}
The proof is a straightforward application of the full triangle inequality. \end{proof}
\begin{cor}
If $X$ is non-degenerate then a forward limiting arrow, if it exists
is unique within compatible cones. To be more precise, if $X$ is
non-degenerate then two forward limiting arrows (of the same forward
sequence) with compatible cones are equal. Dually, backward limiting
arrows in non-degenerate metric $1$-spaces are similarly essentially
unique. \end{cor}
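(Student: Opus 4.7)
The plan is to reduce the statement to the preceding Lemma together with the non-degeneracy hypothesis. Let $S\colon\mathbb{N}_{\bullet}\to X$ be the forward sequence from $x$, and let $C_\mu$ (with apex $y$ and arrows $\mu_k\colon x_k\to y$ for $k\ge m_\mu$) and $C_\nu$ (with apex $y'$ and arrows $\nu_k\colon x_k\to y'$ for $k\ge m_\nu$) be two forward limiting cones, inducing forward limiting arrows $f\colon x\to y$ and $g\colon x\to y'$. By compatibility there is a mediating arrow $h\colon y\to y'$ such that $\nu_l=h\circ\mu_l$ for infinitely many $l$.

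First I would invoke the preceding Lemma to obtain $w(h)=0$. This is the substantive input; everything else is unpacking the definitions. By non-degeneracy of $X$, an arrow of weight zero must be an identity, so $h=\mathrm{id}_y$ and in particular $y=y'$.

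Next I would use the definition of the forward limiting arrow: for the cone $C_\mu$ the arrow $f$ is the common value $\mu_l\circ\psi_l$ (where $\psi_l\colon x\to x_l$ is the arrow from the sequence $S$), and similarly $g=\nu_l\circ\psi_l$. Choosing an index $l$ that simultaneously exceeds $m_\mu$ and $m_\nu$ and belongs to the infinite set on which the triangle commutes, we compute
\[
g=\nu_l\circ\psi_l=h\circ\mu_l\circ\psi_l=\mu_l\circ\psi_l=f,
\]
as required. The dual statement for backward limiting cones follows by applying the same argument inside $X^{op}$, appealing to the dual half of the preceding Lemma.

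The only conceptual point to clarify is the convention that non-degeneracy of a metric $1$-space means $w(\psi)=0\Rightarrow\psi$ is an identity arrow (the natural lift to categories of the classical non-degeneracy axiom $w(x,y)=0\Rightarrow x=y$); this is what allows the weight-zero mediating arrow $h$ to be collapsed to $\mathrm{id}_y$, and in particular to force $y=y'$. Once that convention is in hand there is no real obstacle—the argument is a one-line application of the Lemma followed by the factorization equation.
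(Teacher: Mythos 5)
Your proof is correct and follows exactly the route the paper intends: the corollary is an immediate consequence of Lemma \ref{lem: uniqueness of sequence limit} (the mediating arrow has weight $0$), combined with the convention that non-degeneracy of a metric $1$-space means weight-$0$ arrows are identities (the same reading the paper uses later in the Banach fixed point proof), after which the factorization equation $\nu_l=h\circ\mu_l$ at a suitable common index forces the two limiting arrows to coincide. Your explicit flagging of the non-degeneracy convention and the choice of an index in the infinite commuting set is a welcome bit of care that the paper leaves implicit.
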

\begin{example}
In any metric $1$-space a constant forward (respectively backward)
sequence $\{\psi_{n}\}_{n=0}^{\infty}$, $\psi_{n}=\psi$, has $\psi$
as a forward (respectively backward) limiting arrow. If $S$ is a
set then we saw above that ordinary metric structures on $S$ correspond
to metric structures on the indiscrete category $I_{S}$. Forward
and backward sequences in $I_{S}$ are then essentially the same as
sequences in $S$ and convergence in $S$ and in $I_{S}$ agree. More
explicitly, if $\{\psi_{n}:y\to z_{n}\}_{n=0}^{\infty}$ is a forward
sequence in $I_{S}$ then its limit exists in $I_{S}$ if, and only
if, the sequence of points $\{z_{n}\}_{n=0}^{\infty}$ in $S$ converges
in the ordinary sense. In that case, a limiting object of the sequence
is the unique arrow from $y$ to the limit point of $\{z_{n}\}_{n=0}^{\infty}$
in $S$. A similar remark holds for backward limits. Moreover, starting
with any convergent sequence of points in $S$ one may obtain its
limit as a limiting object in $I_{S}$ of both a forward and a backward
sequence in $I_{S}$.\end{example}
\begin{rem}
All of the results that follow, when specialized to metric $1$-spaces
on an indiscrete category, relate to familiar notions in the ordinary
theory of limits. One of the aims of this work is to show that the
standard theory extends, in just this sense, to our more general setting.
For the sake of keeping the presentation short we will not point out
precisely how each result below extends familiar results. More often
than not, it is self evident. \end{rem}
\begin{prop}
\label{prop:Internal continuity sequences}Let $X$ be a metric $1$-space.
If the forward sequence $S:\mathbb{N}_{\bullet}\to X$ admits a limit
then 
\[
\lim_{n\to\infty}w(S(\bullet\to n))=w(\lim_{n\to\infty}S).
\]
Dually, if the backward sequence $S:\mathbb{N}^{\bullet}\to X$ admits
a limit then
\[
\lim_{n\to\infty}w(S(n\to\bullet))=w(\lim_{n\to\infty}S).
\]
\end{prop}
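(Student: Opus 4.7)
The plan is to extract the single identity that the definition of a forward limiting cone provides and then squeeze it through the full triangle inequality. Write $\psi_{n} = S(\bullet \to n) \colon x \to x_{n}$, and let $\{\rho_{k} \colon x_{k} \to y\}_{k \ge m}$ be the forward limiting cone witnessing that $\varphi = \lim_{n \to \infty} S$ exists. By the very definition of an essential cone we have the factorization $\varphi = \rho_{k} \circ \psi_{k}$ for every $k \ge m$, and by the definition of a \emph{forward limiting} cone we have $\lim_{k \to \infty} w(\rho_{k}) = 0$.

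The first step is to feed the factorization $\varphi = \rho_{k} \circ \psi_{k}$ into both halves of the full triangle inequality for metric $1$-spaces. The upper half yields $w(\varphi) \le w(\rho_{k}) + w(\psi_{k})$, hence $w(\varphi) - w(\psi_{k}) \le w(\rho_{k})$. The lower half yields $|w(\rho_{k}) - w(\psi_{k})| \le w(\varphi)$, from which $w(\psi_{k}) \le w(\rho_{k}) + w(\varphi)$, i.e., $w(\psi_{k}) - w(\varphi) \le w(\rho_{k})$. Combining the two inequalities gives
\[
|w(\psi_{k}) - w(\varphi)| \le w(\rho_{k})
\]
for every $k \ge m$. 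Letting $k \to \infty$ and invoking $w(\rho_{k}) \to 0$ yields $\lim_{n \to \infty} w(\psi_{n}) = w(\varphi)$, which is exactly the claimed equation. The dual statement is obtained by passing to the opposite category: a backward limiting cone gives a factorization $\varphi = \psi_{k} \circ \rho_{k}$, and the full triangle inequality (being symmetric in the two factors of a composition) produces the same bound $|w(\psi_{k}) - w(\varphi)| \le w(\rho_{k})$, after which the argument is identical.

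The only genuine subtlety, and hence the step requiring the most care, is the bookkeeping around the value $\infty$ permitted by the convention in Remark~\ref{Remark:convention infty}. If $w(\varphi) = \infty$, then since $w(\rho_{k}) \to 0$ we have $w(\rho_{k}) < \infty$ for all sufficiently large $k$, and the upper-half inequality $w(\varphi) \le w(\rho_{k}) + w(\psi_{k})$ forces $w(\psi_{k}) = \infty$ eventually, so both sides of the claimed limit equation are $\infty$ and agree. If $w(\varphi) < \infty$, then the eventual finiteness of $w(\rho_{k})$ guarantees that the subtractions in the derivation above take place entirely in $[0, \infty)$, so no $\infty - \infty$ ambiguity is ever invoked and the estimate holds without qualification.
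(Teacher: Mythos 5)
Your proposal is correct and follows essentially the same route as the paper: factor the limiting arrow as $\rho_{k}\circ\psi_{k}$ via the essential cone, apply both halves of the full triangle inequality to obtain $|w(\psi_{k})-w(\lim S)|\le w(\rho_{k})$, and let $k\to\infty$. The extra bookkeeping you supply for the value $\infty$ is a harmless refinement that the paper leaves implicit.
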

\begin{proof}
Since the assertions are dual it suffices to prove the first one.
Let $\mu:x\to y$ be a forward limiting arrow with forward limiting
cone given by arrows $\rho_{m}$ as in the diagram in Definition \ref{Def Forward Limiting arrow of sequence}.
Since for almost all $k$ the diagram
\[
\xymatrix{x\ar[dr]_{\psi_{k}=S(\bullet\to k)}\ar[rr]^{\mu} &  & y\\
 & x_{k}\ar[ur]_{\rho_{k}}
}
\]
commutes, the full triangle inequality implies that 
\[
w(\psi_{k})-w(\rho_{k})\le w(\mu)\le w(\psi_{k})+w(\rho_{k})
\]
and thus
\[
w(\mu)-w(\rho_{k})\le w(\psi_{k})\le w(\mu)+w(\rho_{k}).
\]
The result now follows since $\lim_{k\to\infty}w(\rho_{k})=0$. \end{proof}
\begin{rem}
For ordinary metric spaces the uniqueness of the limit is easily proven
but depends in an essential way on the axiom of symmetry. Indeed,
in a Lawvere space one may consider two kinds of convergence (see
\cite{rutten1998weighted}) none of which exhibits uniqueness of the
limiting point, not even after identifying points with distance $0$.
As shown above, in our approach the essential uniqueness of the limiting
arrow of a sequence is a result of an interplay between the underlying
categorical essential cones (i.e., their compatibility) and the metric
structure defined on the category (with the full triangle inequality
playing an essential role). Viewed this way, the uniqueness of the
limit in ordinary metric spaces is a consequence of the fact that
any two forward (respectively backward) limiting cones over the same
forward (respectively backward) sequence in an indiscrete category
$I_{S}$ are compatible. 
\end{rem}

\subsection{Limits of series}

The ability to compose arrows in a category naturally gives rise to
another limiting notion in metric $1$-spaces. 
\begin{defn}
Let $X$ be a metric $1$-space and $S:\mathbb{N}_{\rightarrow}\to X$
a forward series of arrows in it, depicted by the horizontal arrows
in the following diagram. 
\[
\xymatrix{x_{0}\ar[r]^{\psi_{0}}\ar[ddrr]_{\mu_{0}} & x_{1}\ar[r]^{\psi_{1}}\ar[ddr]_{\mu_{1}} & \cdots\ar[r]^{\psi_{n-1}}\ar@{}[dd]|{\cdots} & x_{n}\ar[ddl]^{\mu_{n}}\ar[r]^{\psi_{n}} & \cdots\ar[ddll]^{\cdots}\\
\\
 &  & x_{\infty}
}
\]
A \emph{forward limit }of the series is an object $x_{\infty}\in ob(X)$
together with arrows $\mu_{n}:x_{n}\to x_{\infty}$, forming a \emph{forward
limiting cone }in the sense that the diagram commutes and $\lim_{n\to\infty}w(\mu_{n})=0$.
We then say that the series \emph{converges} to $\mu_{0}:x_{0}\to x_{\infty}$
which we denote by $\bigcirc_{n=0}^{\infty}\psi_{n}=\mu_{0}$. 
\end{defn}
As usual, the dual notion of a limit of a backward series is obtained
by reversing arrows.

Note that if a forward (respectively backward) series $S:\mathbb{N}_{\rightarrow}\to X$
converges to $\mu_{0}$ then its associated forward (respectively
backward) sequence of partial compositions converges to $\mu_{0}$
(the reverse implication is not generally true). The following is
thus an immediate consequence of Lemma \ref{lem: uniqueness of sequence limit}.
\begin{lem}
\label{lem:mediating is 0 for series}Given a forward (respectively
backward) series that converges to $\mu$ and $\nu$, as witnessed
by cones $C_{\mu}$ and $C_{\nu}$, if $\varphi$ is a mediating arrow
from $C_{\mu}$ to $C_{\nu}$ (as in Definition \ref{Def:categorical tran comp})
then $w(\varphi)=0$.\end{lem}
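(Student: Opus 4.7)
The plan is to reduce this to Lemma \ref{lem: uniqueness of sequence limit} by passing to the associated forward sequence of partial compositions, exactly as the text telegraphs in the sentence introducing the lemma. Write the forward series as $\{\psi_n:x_n\to x_{n+1}\}_{n\ge 0}$ and the two convergent cones as $C_\mu=\{\mu_n:x_n\to x_\infty\}$ and $C_\nu=\{\nu_n:x_n\to x'_\infty\}$, each commuting with the $\psi_n$ and satisfying $\lim_n w(\mu_n)=\lim_n w(\nu_n)=0$. The mediating arrow $\varphi:x_\infty\to x'_\infty$ is assumed, per Definition \ref{Def:categorical tran comp}, to satisfy $\varphi\circ\mu_n=\nu_n$ for every $n\ge 0$.

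Next, I will form the associated forward sequence of partial compositions $\varphi_n=\psi_n\circ\cdots\circ\psi_0:x_0\to x_{n+1}$, viewed as a functor $\mathbb{N}_\bullet\to X$ from the object $x_0$. The observation recorded in the paragraph just before the lemma states that $C_\mu$ induces a forward limiting cone for this sequence; explicitly, the essential cone is $\widetilde C_\mu=\{\mu_{n+1}:x_{n+1}\to x_\infty\}_{n\ge 0}$, since $\mu_{n+1}\circ\varphi_n=\mu_0$ by a repeated application of $\mu_{k+1}\circ\psi_k=\mu_k$, and $\lim_n w(\mu_{n+1})=0$. The analogous cone $\widetilde C_\nu=\{\nu_{n+1}:x_{n+1}\to x'_\infty\}$ arises from $C_\nu$.

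The identity $\varphi\circ\mu_{n+1}=\nu_{n+1}$, which holds for all $n\ge 0$ by hypothesis, is in particular the factorization triangle of Definition \ref{Def:categorical tran comp} restricted to the objects $x_{n+1}$. Since this triangle commutes for every (hence for infinitely many) indices, $\varphi$ is a mediating arrow from $\widetilde C_\mu$ to $\widetilde C_\nu$ in the sense of the factorization notion preceding Lemma \ref{lem: uniqueness of sequence limit}. That lemma then yields $w(\varphi)=0$ directly.

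The dual statement for backward series follows word for word by reversing arrows and invoking the dual half of Lemma \ref{lem: uniqueness of sequence limit}. The only thing to be careful about is the index shift when passing from a series cone $\{\mu_n\}_{n\ge 0}$ to the associated sequence cone $\{\mu_{n+1}\}_{n\ge 0}$ over the partial compositions $\varphi_n:x_0\to x_{n+1}$, but this is purely bookkeeping and poses no genuine obstacle.
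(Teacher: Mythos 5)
Your argument is correct and is exactly the paper's own route: the paper derives this lemma as an ``immediate consequence'' of Lemma \ref{lem: uniqueness of sequence limit} by noting that a convergent series yields a convergent associated sequence of partial compositions, which is precisely the reduction you carry out (with the details of the induced cones and the index shift spelled out explicitly). Nothing is missing.
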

\begin{rem}
Note that the uniqueness within compatible cones of the limit of a
forward (respectively backward) series follows from a less stringent
condition than the existence of a mediating arrow as in the preceding
lemma. The precise relation between series and sequences can be elaborated
much more but these subtleties play no significant role in this work
and are thus neglected. 
\end{rem}
The following simple result, whose proof is omitted, shows again the
importance of the full triangle inequality to obtain results that
echo the fundamentals of the theory of convergence in ordinary metric
spaces. 
\begin{prop}
If a forward (respectively backward) series $\{\psi_{n}\}_{n=0}^{\infty}$
converges then $\lim_{n\to\infty}w(\psi_{n})=0$. Moreover, if the
series forward converges then 
\[
\lim_{n\to\infty}w(\psi_{n}\circ\psi_{n-1}\circ\cdots\circ\psi_{0})=w(\bigcirc_{n=0}^{\infty}\psi_{n})
\]
while if the series backward converges then 
\[
\lim_{n\to\infty}w(\psi_{0}\circ\psi_{1}\circ\cdots\circ\psi_{n})=w(\bigcirc_{n=0}^{\infty}\psi_{n}).
\]

\end{prop}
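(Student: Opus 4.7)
The plan is to exploit the full triangle inequality together with the commutativity of the forward limiting cone $\{\mu_n : x_n \to x_\infty\}$, for which by hypothesis $w(\mu_n) \to 0$. The backward assertions then follow by categorical duality, so I would treat only the forward case.

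For the first claim, the cone gives the commuting triangle $\mu_n = \mu_{n+1} \circ \psi_n$, and the full triangle inequality applied to this composition yields $|w(\mu_{n+1}) - w(\psi_n)| \le w(\mu_n)$, which rearranges to
\[
0 \le w(\psi_n) \le w(\mu_n) + w(\mu_{n+1}).
\]
The right-hand side tends to $0$, so $w(\psi_n) \to 0$.

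For the second claim, set $\Psi_n = \psi_n \circ \psi_{n-1} \circ \cdots \circ \psi_0$ and use the cone relation $\mu_0 = \mu_{n+1} \circ \Psi_n$. The full triangle inequality applied to this factorization produces the squeeze
\[
|w(\mu_0) - w(\mu_{n+1})| \le w(\Psi_n) \le w(\mu_0) + w(\mu_{n+1}),
\]
and since $w(\mu_{n+1}) \to 0$ both bounds converge to $w(\mu_0) = w(\bigcirc_{n=0}^{\infty} \psi_n)$, forcing $w(\Psi_n)$ to converge to the same value.

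There is no serious obstacle; the argument is structurally parallel to Proposition~\ref{prop:Internal continuity sequences}, with the composite $\Psi_n$ playing the role previously played by the individual legs of a sequence. The only point requiring care is the bookkeeping against Remark~\ref{Remark:convention infty}: in the degenerate case $w(\mu_0) = \infty$, the convention $|\infty - x| = \infty$ makes the lower squeeze bound force $w(\Psi_n) = \infty$ for every $n$ large enough that $w(\mu_{n+1})$ is finite, and the claimed limit $w(\Psi_n) \to \infty = w(\mu_0)$ still holds.
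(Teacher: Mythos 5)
Your argument is correct and is essentially the paper's intended one: the paper omits the proof as a "simple result," and the argument it has in mind is precisely this application of the full triangle inequality to the cone identities $\mu_n=\mu_{n+1}\circ\psi_n$ and $\mu_0=\mu_{n+1}\circ\psi_n\circ\cdots\circ\psi_0$, in direct analogy with Proposition \ref{prop:Internal continuity sequences}, with the backward case by duality. Your bookkeeping with Remark \ref{Remark:convention infty} is also sound, since $w(\mu_{n+1})$ is eventually finite, so the rearrangements of the inequalities are legitimate for large $n$.
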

The following lemma will be used in the proof of the Banach fixed
point theorem below. Again a proof is left for the reader. 
\begin{lem}
\label{lem:truncations of series}Let $S:\mathbb{N}_{\rightarrow}\to X$
be a forward series in a metric $1$-space $X$ with forward limiting
cone $\{\psi_{n}\}_{n=0}^{\infty}$. If $k\ge0$ is any natural number
then the forward series $S^{k}:\mathbb{N}_{\rightarrow}\to X$, given
by $S^{k}(n)=S(n+k)$ and extended uniquely to arrows, has $\{\psi_{m}\}_{m=k}^{\infty}$
as forward limiting cone. 
\end{lem}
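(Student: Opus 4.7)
The plan is to verify directly that the tail family $\{\psi_m\}_{m=k}^{\infty}$ meets the two defining conditions of a forward limiting cone for $S^k$: the commutativity of the cone diagram, and the weight-vanishing condition $\lim_{m\to\infty}w(\psi_m)=0$.

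First I would unpack $S^k$ explicitly. By construction, $S^k$ is the composite of $S$ with the functor $\mathbb{N}_{\rightarrow}\to\mathbb{N}_{\rightarrow}$ that shifts every index up by $k$. Hence $S^k$ takes the value $x_{n+k}$ on the object $n$ and sends the generating arrow $n\to n+1$ to the arrow $S(n+k\to n+k+1)$ in $X$. In other words, $S^k$ is nothing but the portion of $S$ from index $k$ onward, relabelled to start at index $0$.

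Next I would check the cone condition. The hypothesis that $\{\psi_m\}_{m=0}^{\infty}$ is a forward limiting cone for $S$ asserts, in particular, that for every $m\ge 0$ the triangle
\[
\psi_{m+1}\circ S(m\to m+1)=\psi_m
\]
commutes in $X$. Restricting these equations to indices $m\ge k$ gives precisely the commutativity required for $\{\psi_m\}_{m=k}^{\infty}$ to form a cone over $S^k$ with apex $x_\infty$. For the weight condition, $\lim_{m\to\infty}w(\psi_m)=0$ is part of the assumption, and deleting finitely many initial terms leaves this limit unchanged; therefore the tail still satisfies $\lim_{m\to\infty}w(\psi_m)=0$, and so it is indeed a forward limiting cone for $S^k$.

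There is no genuine obstacle: the full triangle inequality plays no role and the verification is pure bookkeeping. The only point to watch is the notational overload in the lemma's statement, where $\psi_m$ denotes the cone arrows $x_m\to x_\infty$ rather than the arrows of the series itself; once that distinction is respected, both cone axioms descend immediately from $S$ to its shift $S^k$.
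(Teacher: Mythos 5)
Your verification is correct, and since the paper leaves this lemma's proof to the reader, your direct bookkeeping argument---restricting the cone's commuting triangles to indices $m\ge k$ and noting that discarding finitely many terms does not affect $\lim_{m\to\infty}w(\psi_m)=0$---is exactly the intended proof. You also rightly flag the only real pitfall, namely that $\psi_m$ here denotes the cone arrows into the apex rather than the arrows of the series.
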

The dual notion relates truncations of a backward limiting cone of
a backward series to truncations of the backward series.

\section{\label{sec:Continuity}Continuity}

We present two notions of continuity at an arrow and some related
concepts. As we show below, the two concepts of continuity coincide
when the domain is suitably compact. We mention as well that there
is a third natural notion of continuity at an arrow which is obtained
by considering decompositions of $\psi$ as $\rho\varphi\tau$. The
theory of this kind of limit is similar to the other two and there
are interrelations between all notions. However, to keep the presentation
more concise we do not consider this third possibility here. 

In what follows it will be useful to introduce the following convention.
For an arrow $x\to y$ in a metric $1$-space and a non-negative real
number $t$ we write $\xymatrix{x\ar[r]^{t} & y}
$ to indicate that $w(x\to y)<t$.
\begin{defn}
$F$ is said to be \emph{forward continuous} at the arrow $\psi:x\to z$
if for every $\epsilon>0$ there is a $\delta>0$ such that $F$ sends
any commuting diagram of the form
\[
\xymatrix{x\ar[dd]_{\psi}\ar[rd]\\
 & y\ar[dl]^{\delta}\\
z
}
\]
to a diagram of the form
\[
\xymatrix{Fx\ar[dd]_{F\psi}\ar[rd]\\
 & Fy\ar[dl]^{\epsilon}\\
Fz
}
\]
We say that $F$ is \emph{forward continuous} if it is forward continuous
at every arrow in $X$. We say that $F$ is\emph{ uniformly continuous}
if for every $\epsilon>0$ there is a $\delta>0$ such that $w(F(\psi))<\epsilon$
holds for every $\psi$ with $w(\psi)<\delta$. 
\end{defn}
The notions of a functor being \emph{backward continuous }at an arrow
and \emph{backward continuous} are defined by duality. Notice that
the concept of uniform continuity is self-dual. 
\begin{defn}
Let $F:X\to Y$ be a functor between metric $1$-spaces. We say that\end{defn}
\begin{itemize}
\item $F$ is\emph{ forward continuous at an object $x_{0}\in ob(X)$ }if
for every $\epsilon>0$ there exists $\delta>0$ such that $w(\psi)<\delta$
implies $w(F(\psi))<\epsilon$ for every $\psi:x\to x_{0}$.
\item $F$ is \emph{backward continuous }at an object \emph{$x_{0}\in ob(X)$
}if for every $\epsilon>0$ there exists $\delta>0$ such that $w(\psi)<\delta$
implies $w(F(\psi))<\epsilon$ for every $\psi:x_{0}\to x$.
\item $F$ is \emph{object forward (respectively backward) continuous }if
it is forward (respectively backward) continuous at every object $x_{0}\in ob(X)$.\end{itemize}
\begin{prop}
Let $F:X\to Y$ be a functor between metric $1$-spaces and $\psi:x\to z$
an arrow in $X$. \end{prop}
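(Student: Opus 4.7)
The statement as displayed is only the setup (``Let $F:X\to Y\ldots$ an arrow in $X$.''), but given the section's buildup — arrow-based forward continuity, object-based forward continuity, and the parallel theory of forward limits of sequences in Section~\ref{sec:convergence} — the expected conclusion is a sequential characterization: $F$ is forward continuous at $\psi$ if and only if, for every forward sequence $S:\mathbb{N}_{\bullet}\to X$ with forward limiting arrow $\psi$, the functor $F$ sends a forward limiting cone of $S$ to a forward limiting cone of $F\circ S$ (equivalently, $F\psi$ is a forward limiting arrow of $F\circ S$). The plan is to prove both implications, with the nontrivial direction requiring a witness construction from a failure of $\epsilon$-$\delta$ continuity.

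For the forward direction, I would start from an $\epsilon>0$ and extract $\delta>0$ from the continuity hypothesis. Given a forward sequence $S$ with forward limiting cone $\{\rho_k:x_k\to z\}_{k\ge m}$ and limiting arrow $\psi$, the defining square of each $\rho_k$ is the factorization $\psi=\rho_k\circ S(\bullet\to k)$ that appears in the definition of forward continuity at $\psi$. Since $w(\rho_k)\to 0$, eventually $w(\rho_k)<\delta$, so $w(F\rho_k)<\epsilon$ by hypothesis. Functoriality ensures that $\{F\rho_k\}$ still commutes with the image sequence $F\circ S$, so this is an essential cone with weights tending to $0$; hence $F\psi$ is a forward limiting arrow of $F\circ S$.

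For the converse I would proceed by contrapositive. If $F$ is not forward continuous at $\psi$, there exists $\epsilon_0>0$ such that for every $n\ge 1$ one can choose a factorization $\psi=\rho_n\circ\varphi_n$ with $\varphi_n:x\to y_n$, $w(\rho_n)<1/n$, and $w(F\rho_n)\ge\epsilon_0$. The family $\{\varphi_n:x\to y_n\}$ has common domain $x$ and therefore assembles into a functor $\mathbb{N}_{\bullet}\to X$, i.e.\ a forward sequence from $x$; the arrows $\rho_n$ witness $\{\rho_n\}$ as an essential cone to $z$ with $\lim w(\rho_n)=0$, so $\psi$ is a forward limiting arrow of this sequence. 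Applying $F$ yields the essential cone $\{F\rho_n\}$ over $F\circ S$, but $w(F\rho_n)\ge\epsilon_0$ eliminates the possibility that $F\psi$ is a forward limiting arrow through this cone; Lemma~\ref{lem: uniqueness of sequence limit} then allows us to rule out compatible alternative cones under mild non-degeneracy, giving the contradiction.

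The main obstacle I anticipate is bookkeeping around \emph{which} cone witnesses the limit: the preservation statement must be phrased in terms of the specific cone produced, because the uniqueness of forward limiting arrows requires compatibility of cones (as emphasized after Lemma~\ref{lem: uniqueness of sequence limit}). Concretely, the contrapositive must rule out the pathological possibility that $F\psi$ happens to be a forward limiting arrow of $F\circ S$ via a \emph{different}, non-compatible essential cone. Addressing this either requires restricting to non-degenerate targets $Y$ (so mediating arrows collapse to weight zero and force agreement up to the relevant equivalence) or sharpening the proposition's conclusion to specify that the particular cone $\{F\rho_k\}$ is a forward limiting cone. Once this point is settled, the rest is a direct unwinding of the two definitions together with the $\lim w(\rho_k)=0$ condition that is built into Definition~\ref{Def Forward Limiting arrow of sequence}.
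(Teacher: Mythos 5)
You have reconstructed, and then proved, the wrong conclusion. The proposition's conclusion is the itemized list that immediately follows the \texttt{prop} environment in the source: \emph{forward continuity at the object $z$ implies forward continuity at the arrow $\psi:x\to z$}, and dually, \emph{backward continuity at the object $x$ implies backward continuity at $\psi:x\to z$}. This is a comparison between the object-level and arrow-level notions of continuity introduced just before, not a sequential characterization of continuity at an arrow. The paper's proof is the one-line unwinding of definitions: given $\epsilon>0$, object forward continuity at $z$ supplies a $\delta>0$ such that every arrow $\rho:y\to z$ with $w(\rho)<\delta$ satisfies $w(F\rho)<\epsilon$; but in any commuting triangle $\psi=\rho\circ\varphi$ witnessing the hypothesis of forward continuity at $\psi$, the second leg $\rho$ is precisely such an arrow into $z$, so $w(F\rho)<\epsilon$ and the image triangle has the required form. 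No sequences, cones, or limits enter.

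The statement you did prove (preservation of forward limiting cones as a characterization of continuity at an arrow) is related to Theorem \ref{thm:mapping preserves limits}, but even there the paper only asserts the easy direction (continuity implies preservation of limits, cone by cone); the converse you attempt is exactly where, as you yourself observe, the compatibility-of-cones issue and possible degeneracy of $Y$ create real obstructions, which is presumably why the paper does not claim it. So the difficulty you flag in your last paragraph is a symptom of having targeted a stronger, unstated result rather than the modest implication actually being proved.
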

\begin{itemize}
\item Forward continuity at $z$ implies forward continuity at $\psi:x\to z$. 
\item Backward continuity at $x$ implies backward continuity at $\psi:x\to z$. \end{itemize}
\begin{proof}
Straightforward. \end{proof}
\begin{prop}
\label{prop:unif. cont. implies obj. cont.}Let $F:X\to Y$ be a functor
between metric $1$-spaces. If $F$ is uniformly continuous then $F$
is object forward and object backward continuous.\end{prop}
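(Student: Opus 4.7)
The plan is to observe that object forward/backward continuity at a specified object is just the definition of uniform continuity with the quantifier over $\psi$ restricted to a subclass of arrows, so the same $\delta$ witnessing uniform continuity witnesses both object continuity conditions simultaneously.

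More concretely, I would proceed as follows. Fix an object $x_{0}\in ob(X)$ and fix $\epsilon>0$. By uniform continuity of $F$, choose $\delta>0$ such that $w(F(\psi))<\epsilon$ for every arrow $\psi$ in $X$ with $w(\psi)<\delta$. This $\delta$ depends only on $\epsilon$, not on the endpoints of $\psi$. In particular, whenever $\psi:x\to x_{0}$ satisfies $w(\psi)<\delta$ we have $w(F(\psi))<\epsilon$, which verifies forward continuity of $F$ at $x_{0}$; and whenever $\psi:x_{0}\to x$ satisfies $w(\psi)<\delta$ we again have $w(F(\psi))<\epsilon$, which verifies backward continuity of $F$ at $x_{0}$. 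Since $x_{0}$ was arbitrary, $F$ is object forward and object backward continuous.

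There is no genuine obstacle here, since the conclusion is a weakening of the hypothesis by quantifier restriction. The only thing to note explicitly is the self-dual nature of uniform continuity, so that no separate argument is needed for the backward case: the same $\delta$ serves both, because the definition of uniform continuity quantifies over all arrows of $X$ without reference to their domain or codomain.
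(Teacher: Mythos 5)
Your proof is correct, and given the paper's literal definition of uniform continuity it is exactly the right (and shortest) argument: object forward/backward continuity at $x_{0}$ is obtained from uniform continuity by restricting the quantifier to arrows with codomain (respectively domain) $x_{0}$, so the same $\delta$ works. The paper's own proof reaches the same conclusion by dressing this observation in the commuting-triangle language of arrow continuity, applying it to the decomposition $\varphi=\varphi\circ id_{y}$ for an arrow $\varphi:y\to z$ with $w(\varphi)<\delta$; the content is identical, and your direct reading of the definition loses nothing.
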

\begin{proof}
To prove the forward case (the backward case being obtained by duality)
fix an object $z\in ob(X)$. Given $\epsilon>0$ there is a $\delta>0$
such that any commuting square of the form 
\[
\xymatrix{x\ar[dd]_{\psi}\ar[rd]\\
 & y\ar[dl]^{\delta}\\
z
}
\]
yields, upon applying $F$, a square of the form 
\[
\xymatrix{Fx\ar[dd]_{F\psi}\ar[rd]\\
 & Fy\ar[dl]^{\epsilon}\\
Fz
}
\]
In particular thus, given an arrow $\xymatrix{y\ar[r]^{\varphi} & z}
$, with $w(\varphi)<\delta$, consider the commuting diagram
\[
\xymatrix{y\ar[dd]_{\varphi}\ar[rd]^{id}\\
 & y\ar[dl]^{\varphi}\\
z
}
\]
and apply $F$ to it to obtain the diagram
\[
\xymatrix{Fy\ar[dd]_{F\varphi}\ar[rd]^{id}\\
 & Fy\ar[dl]^{F\varphi}\\
Fz
}
\]
proving that $w(F(\varphi))<\epsilon$ and so $F$ is forward continuous
at $z$. 
\end{proof}
Thus we see that forward object continuity lies between forward continuity
and uniform continuity. A similar remark holds for backward continuity. 

The next result is the metric analogue of the fact that a functor
$F:\mathscr{C}\to\mathscr{D}$ must map an isomorphism to an isomorphism.
\begin{lem}
\label{lem:cont preserves 0 weight}Let $X$ and $Y$ be metric $1$-spaces.
If $F:X\to Y$ is either forward or backward continuous at the arrow
$\psi$ and $w(\psi)=0$ then $w(F(\psi))=0$.\end{lem}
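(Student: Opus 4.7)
The plan is to exploit the definition of forward (respectively backward) continuity at $\psi$ by exhibiting a trivial factorization of $\psi$ whose ``small'' leg is $\psi$ itself, and then invoke the hypothesis $w(\psi)=0$ to conclude that $w(F(\psi))$ can be made smaller than any $\epsilon>0$.

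For the forward case, suppose $F$ is forward continuous at $\psi:x\to z$ and let $\epsilon>0$ be given, with corresponding $\delta>0$ from the definition. Consider the commuting triangle
\[
\xymatrix{x\ar[dd]_{\psi}\ar[rd]^{id_{x}}\\
 & x\ar[dl]^{\psi}\\
z
}
\]
that is, take $y=x$, let the arrow $x\to y$ be $id_{x}$, and let the arrow $y\to z$ be $\psi$ itself. Because $w(\psi)=0<\delta$, this diagram satisfies the hypothesis of forward continuity at $\psi$, and applying $F$ produces
\[
\xymatrix{Fx\ar[dd]_{F\psi}\ar[rd]^{id_{Fx}}\\
 & Fx\ar[dl]^{F\psi}\\
Fz
}
\]
with $w(F\psi)<\epsilon$. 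Since $\epsilon>0$ was arbitrary, $w(F(\psi))=0$. The backward case is handled dually, by taking the factorization of $\psi$ as $id_{z}\circ\psi$ (i.e.\ $y=z$ with $\psi:x\to y$ the ``small'' leg and $id_{z}:y\to z$ the other), so that the hypothesis of backward continuity at $\psi$ applies with $w(\psi)=0<\delta$, yielding $w(F(\psi))<\epsilon$ for every $\epsilon>0$.

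There is really no obstacle here; the only point requiring care is checking that the chosen triangles do satisfy the commutativity condition in the definition of continuity at an arrow, which is immediate from the identity axioms $\psi\circ id_{x}=\psi$ and $id_{z}\circ\psi=\psi$. The argument is exactly the categorical analogue of the classical observation that a continuous map carries a point of self-distance $0$ to one of self-distance $0$.
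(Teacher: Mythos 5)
Your proof is correct and follows the same route as the paper: factor $\psi$ through itself with an identity (taking the small leg to be $\psi$, of weight $0<\delta$), apply $F$, and let $\epsilon\to 0$, with the backward case handled dually. This is exactly the argument given in the paper.
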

\begin{proof}
Let us assume $F$ is forward continuous (the proof in the backward
case is similar) and let $\epsilon>0$ with a corresponding $\delta>0$
obtained from forward continuity at $\psi$. Since we can always write
\[
\xymatrix{x\ar[dd]_{\psi}\ar[dr]^{id}\\
 & x\ar[dl]^{\psi}\\
y
}
\]
and since $w(\psi)<\delta$ we may conclude that $w(F\psi)<\epsilon$,
and the result follows. 
\end{proof}
Lastly, the following result is the expected claim that forward (respectively
backward) continuous functor preserve forward (respectively backward)
limits of both sequences and series. The proof is trivial and thus
omitted. 
\begin{thm}
\label{thm:mapping preserves limits}For a functor $F:X\to Y$ between
metric $1$-spaces the following hold.\end{thm}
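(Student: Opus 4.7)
The theorem, although truncated in the excerpt, clearly asserts (in view of the paragraph preceding it) that a forward continuous functor preserves forward limits of both sequences and series, with the dual statement for backward continuity. By categorical duality it suffices to treat the two forward cases; I will describe the sequence case in full, since the series case is structurally identical.

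Suppose $F : X \to Y$ is forward continuous and $S : \mathbb{N}_{\bullet} \to X$ is a forward sequence from $x$ with forward limiting cone $\{\rho_k : x_k \to y\}_{k \geq m}$ and forward limiting arrow $\mu = \lim_{n \to \infty} S$. My plan is to exhibit $\{F(\rho_k)\}_{k \geq m}$ as a forward limiting cone for $F \circ S$ with forward limiting arrow $F(\mu)$. Commutativity is immediate from functoriality: for each $k \geq m$,
\[
F(\rho_k) \circ F(S(\bullet \to k)) = F(\rho_k \circ S(\bullet \to k)) = F(\mu),
\]
so $\{F(\rho_k)\}_{k \geq m}$ forms an essential cone over $F \circ S$ with apex $F(y)$ whose common composite from $F(x)$ is $F(\mu)$. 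It remains to verify the metric condition $\lim_{k \to \infty} w(F(\rho_k)) = 0$.

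Here is the one nontrivial step. Fix $\epsilon > 0$ and apply forward continuity of $F$ \emph{at the arrow $\mu : x \to y$} to obtain $\delta > 0$ such that every commuting triangle
\[
\xymatrix{x \ar[dd]_{\mu} \ar[rd] \\ & x' \ar[dl]^{\delta} \\ y}
\]
is sent by $F$ to a triangle whose right-hand arrow has weight less than $\epsilon$. Since $\lim_{k \to \infty} w(\rho_k) = 0$, there exists $N \geq m$ such that $w(\rho_k) < \delta$ for every $k \geq N$; for each such $k$, the cone identity $\rho_k \circ S(\bullet \to k) = \mu$ furnishes precisely a triangle of the form above with $x' = x_k$. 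Hence $w(F(\rho_k)) < \epsilon$ for all $k \geq N$, which establishes $\lim_{k \to \infty} w(F(\rho_k)) = 0$ and completes the argument.

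The forward series case runs on the same template: if $S : \mathbb{N}_{\rightarrow} \to X$ converges to $\mu_0$ with cone $\{\mu_n : x_n \to x_{\infty}\}$, then functoriality produces a cone $\{F(\mu_n)\}$ for $F \circ S$ with apex $F(x_{\infty})$, and applying forward continuity of $F$ at the arrow $\mu_0 : x_0 \to x_{\infty}$ yields $\lim_{n \to \infty} w(F(\mu_n)) = 0$. The backward versions follow verbatim by invoking categorical duality (reversing arrows throughout and replacing forward continuity at $\mu$ or $\mu_0$ by backward continuity at the corresponding arrow). I anticipate no real obstacle: the only point requiring thought is selecting the correct arrow at which to invoke the continuity hypothesis, namely the forward (or backward) limiting arrow itself, after which the cone commutativity supplies precisely the triangle demanded by the definition of forward (or backward) continuity.
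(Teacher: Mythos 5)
Your argument is correct: the paper omits this proof as ``trivial,'' and what you write is precisely the intended argument --- functoriality yields the cone identities $F(\rho_k)\circ F(S(\bullet\to k))=F(\mu)$, and forward (respectively backward) continuity \emph{at the limiting arrow}, applied to the factorizations of that arrow furnished by the cone, gives $\lim_{k\to\infty}w(F(\rho_k))=0$. Your identification of the correct arrow at which to invoke continuity (the limiting arrow $\mu$, resp.\ $\mu_0$ for series) matches the hypothesis of the theorem exactly, and the series and dual cases go through verbatim as you say.
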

\begin{itemize}
\item For every forward sequence $S:\mathbb{N}_{\bullet}\to X$, a forward
limiting arrow $\mu$, and a forward limiting cone $C$ holds that
if $F$ is forward continuous at $\mu$ then $F(\mu)$ is a forward
limiting arrow of the forward sequence $\mathbb{N}_{\bullet}\to X\to Y$
with forward limiting cone $F(C)$. 
\item For every forward series $S:\mathbb{N}_{\rightarrow}\to X$, a forward
limiting arrow $\mu$, and a forward limiting cone $C$ holds that
if $F$ is forward continuous at $\mu$ then $F(\mu)$ is a forward
limiting arrow of the forward series $\mathbb{N}_{\rightarrow}\to X\to Y$
with forward limiting cone $F(C)$.
\item For every backward sequence $S:\mathbb{N}^{\bullet}\to X$, a backward
limiting arrow $\mu$, and a backward limiting cone $C$ holds that
if $F$ is backward continuous at $\mu$ then $F(\mu)$ is a backward
limiting arrow of the backward sequence $\mathbb{N}^{\bullet}\to X\to Y$
with backward limiting cone $F(C)$. 
\item For every backward series $S:\mathbb{N}_{\leftarrow}\to X$, a backward
limiting arrow $\mu$, and a backward limiting cone $C$ holds that
if $F$ is backward continuous at $\mu$ then $F(\mu)$ is a backward
limiting arrow of the backward series $\mathbb{N}_{\leftarrow}\to X\to Y$
with backward limiting cone $F(C)$.
\end{itemize}

\section{\label{sec:Fundamental-results}Fundamental results}

The aim of this section is to show that three fundamental results
from the theory of ordinary metric spaces extend to metric $1$-spaces.
The results we generalize are that continuity implies uniform continuity
when the domain is compact, that for compact spaces mapping spaces
are metric spaces, and the Banach fixed point theorem for contractive
mappings on complete spaces. We note immediately that due to the different
notions of convergence (i.e., sequences vs. series) and the forward/backward
duality there are several ways to interpret how compactness and completeness
should be generalized. The definitions we present here are chosen
to best fit the proofs of the three theorems we aim at. Other possibilities
exist and are useful in different situations. We also mention that
the proofs use most of the results recounted above and thus all depend
essentially on the full triangle inequality. In particular, the proofs
below make no use of symmetry and thus, upon specializing them to
metric $1$-spaces with indiscrete underlying categories, we obtain
proofs of the classical versions of the mentioned results that do
not make explicit use of symmetry. After presently stating the relevant
definitions of compactness and completeness we proceed to prove the
three results in the just stated order. 
\begin{defn}
A metric $1$-space $X$ is said to be \emph{forward compact }(respectively
\emph{backward compact) if} every forward (respectively backward)
sequence has a convergent subsequence.
\end{defn}
Here the meaning of 'subsequence' is the obvious one.
\begin{defn}
\label{Def:obj compac}A metric $1$-space $X$ is said to be \emph{object
compact }if given any sequence  $\{x_{n}\}_{n=1}^{\infty}$ of objects
in $ob(X)$ there exists an object $x_{0}\in ob(X)$ such that for
every $\epsilon>0$ and $N\in\mathbb{N}$ there exist $m,n>N$ and
arrows $\psi:x_{n}\to x_{0}$ and $\varphi:x_{0}\to x_{m}$ such that
both $w(\psi)<\epsilon$ and $w(\varphi)<\epsilon$.
\end{defn}
Of course this definition could be refined to include backward and
forward versions. For the proofs below such generality is not needed.
\begin{defn}
Let $X$ be a metric $1$-space. A forward series $\{\psi_{n}:x_{n}\to x_{n+1}\}_{n=1}^{\infty}$
in $X$ is a\emph{ Cauchy forward series }if for every $\epsilon>0$
there exists $N\in\mathbb{N}$ such that for every $n>m>N$ holds
that $w(\psi_{n}\circ\psi_{n-1}\circ\cdots\circ\psi_{m})<\epsilon$.
Dually, a backward series $\{\psi_{n}:x_{n+1}\to x_{n}\}_{n=0}^{\infty}$
is a \emph{Cauchy backward series }if for every $\epsilon>0$ there
exists $N\in\mathbb{N}$ such that for every $n>m>N$ holds that $w(\psi_{m}\circ\psi_{m+1}\circ\cdots\circ\psi_{n})<\epsilon$. 
\end{defn}
We omit the details of the straightforward fact that any convergent
forward or backward series is Cauchy.
\begin{defn}
A metric $1$-space $X$ is said to be \emph{forward complete }(respectively
\emph{backward complete) }if every forward (respectively backward)
Cauchy series $\{\psi_{n}\}_{n=1}^{\infty}$ converges.  
\end{defn}

\subsection{Uniform continuity}
\begin{thm}
Let $F:X\to Y$ be a forward (respectively backward) continuous functor
between metric $1$-spaces. If $X$ is forward (respectively backward)
compact then $F$ is object forward (respectively backward) continuous.
If moreover $X$ is object compact then $F$ is uniformly continuous.\end{thm}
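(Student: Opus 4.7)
The plan is to argue by contradiction in both parts. Each argument extracts a convergent subsequence using the compactness hypothesis and then exploits the fact that the limiting arrow has weight zero in conjunction with continuity at that arrow.

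For the first assertion we treat the forward case; the backward case is formally dual. Suppose $F$ fails to be object forward continuous at some $z \in ob(X)$, so there exist $\epsilon_0 > 0$ and arrows $\psi_n : y_n \to z$ with $w(\psi_n) < 1/n$ and $w(F\psi_n) \geq \epsilon_0$. These arrows share the common codomain $z$ and so form a sequence to which the compactness hypothesis applies; after passing to a subsequence we obtain a limiting arrow $\mu : y_\infty \to z$ together with a limiting cone $\{\rho_k : y_\infty \to y_{n_k}\}$ satisfying $\psi_{n_k} \circ \rho_k = \mu$ and $w(\rho_k) \to 0$. Proposition \ref{prop:Internal continuity sequences} then forces $w(\mu) = \lim_k w(\psi_{n_k}) = 0$. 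Applying forward continuity of $F$ at the single arrow $\mu$ with tolerance $\epsilon_0 / 2$ yields a $\delta > 0$ such that any factorization $\mu = \tau \circ \varphi$ with $w(\tau) < \delta$ gives $w(F\tau) < \epsilon_0/2$. For $k$ large enough $w(\psi_{n_k}) < \delta$, and the factorization $\mu = \psi_{n_k} \circ \rho_k$ then yields $w(F\psi_{n_k}) < \epsilon_0/2$, contradicting $w(F\psi_{n_k}) \geq \epsilon_0$.

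For the second assertion, assume object compactness in addition. If $F$ is not uniformly continuous, there are $\epsilon_0 > 0$ and arrows $\psi_n : x_n \to y_n$ with $w(\psi_n) < 1/n$ and $w(F\psi_n) \geq \epsilon_0$. Apply object compactness to the sequence of codomains $\{y_n\}$, obtaining $y_0 \in ob(X)$ such that for every $\eta > 0$ there exist arbitrarily large $n$ and arrows $\alpha : y_n \to y_0$ with $w(\alpha) < \eta$. By the first assertion, $F$ is object forward continuous at $y_0$; choosing $\eta$ small enough ensures both $w(F\alpha) < \epsilon_0/4$ and $w(F(\alpha \circ \psi_n)) < \epsilon_0/4$, since $\alpha \circ \psi_n$ is also an arrow into $y_0$ of small weight. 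The full triangle inequality applied to $F(\alpha \circ \psi_n) = F\alpha \circ F\psi_n$ then gives
\[
w(F\psi_n) \leq w(F\alpha) + w(F(\alpha \circ \psi_n)) < \epsilon_0/2,
\]
contradicting $w(F\psi_n) \geq \epsilon_0$. One could dually apply object compactness to $\{x_n\}$ and invoke object backward continuity; the hypothesis of both forward and backward continuity affords this flexibility.

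The main obstacle is tracking directions: deciding which variety of compactness produces the convergent subsequence in the first part, and whether to apply object compactness to domains or codomains in the second. The conceptual heart of each argument, however, is uniform: the full triangle inequality combined with Proposition \ref{prop:Internal continuity sequences} allows continuity at a single zero-weight limiting arrow (or at the accumulation object) to control all the supposedly bad arrows at once.
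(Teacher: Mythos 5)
Your core mechanism is the paper's: argue by contradiction, extract a convergent subsequence from the family of offending arrows, and use continuity at the resulting limiting arrow to control all of the bad arrows at once. Two of your deviations are genuine gains in economy. In the first part you reach the contradiction directly from the factorization definition of forward continuity at $\mu$ (the commuting triangles $\mu=\psi_{n_k}\circ\rho_k$ with small leg $\psi_{n_k}$), whereas the paper routes through Theorem \ref{thm:mapping preserves limits}, Proposition \ref{prop:Internal continuity sequences} and Lemma \ref{lem:cont preserves 0 weight}; your route does not even need $w(\mu)=0$. In the second part you avoid the paper's second appeal to forward compactness: the paper composes the bad arrows with arrows supplied by object compactness and then extracts yet another convergent subsequence, while you obtain the contradiction from object continuity at the accumulation object together with the full triangle inequality, in effect proving the cleaner statement that object continuity plus object compactness already implies uniform continuity.

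However, the step you yourself flag as unresolved, namely which compactness produces the convergent subsequence in the first part, is a real gap as written. With the paper's definition, object forward continuity at $z$ concerns arrows \emph{into} $z$, so your bad arrows $\psi_n\colon y_n\to z$ share a codomain and form a backward sequence; forward compactness, which speaks only of forward sequences (common domain), does not license the extraction of a convergent subsequence, and what your argument actually needs is backward compactness, which is not among the hypotheses in the forward case. The paper dodges this by negating a condition on arrows \emph{out of} the object (its bad arrows are $\psi_n\colon x\to x_n$), so that they form a forward sequence matching forward compactness, at the price of clashing with its own definition of object forward continuity; the direction tension is thus inherited from the paper, but your proof does not resolve it either. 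Note also that the two halves must be flipped consistently: if you adopt the paper's de facto reading so that part one works with forward compactness, then part two can no longer invoke control of arrows into $y_0$ and must instead apply object compactness to the domains $\{x_n\}$ and use control of arrows out of the accumulation object, which, as your closing remark anticipates, goes through verbatim with the same triangle-inequality computation.
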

\begin{proof}
Assume that $F$ is forward continuous at every arrow but not forward
continuous at some  object $x\in ob(X)$. Thus, there exists an $\epsilon_{0}>0$
and, for every $n\in\mathbb{N}$, an arrow $\psi_{n}:x\to x_{n}$
such that $w(\psi_{n})<\frac{1}{n}$ and $w(F\psi_{n})\ge\epsilon_{0}$.
The arrows $\psi_{n}$ form a forward sequence which, by forward compactness
of $X$, we may assume converges to some arrow $\mu$. Proposition
\ref{prop:Internal continuity sequences} implies that $w(\mu)=0$
and together with Theorem \ref{thm:mapping preserves limits} we may
also conclude that $w(F\mu)\ge\epsilon_{0}$. We now arrive at a contradiction
with Lemma \ref{lem:cont preserves 0 weight}. It follows that $F$
is object forward continuous. 

Now, under the extra assumption that $X$ is object compact, suppose
that $F$ is not uniformly forward continuous. Then there exists an
$\epsilon_{0}>0$ and for every $n\in\mathbb{N}$ an arrow $\varphi_{n}:x_{n}\to z_{n}$
such that $w(\varphi_{n})<\frac{1}{n}$ and $w(F(\varphi_{n}))\ge\epsilon_{0}$.
By object compactness there is an object $x$ and arrows $\psi_{n}:x\to x_{n}$
with $w(\psi_{n})<\frac{1}{n}$. Since $F$ is forward continuous
at $x$ we may assume without loss of generality that $w(F(\psi_{n}))<\frac{\epsilon_{0}}{2}$
holds for all $n\ge1$. The full triangle inequality now yields that
$w(F(\varphi_{n}\circ\psi_{n}))>\frac{\epsilon_{0}}{2}$ while $w(\varphi_{n}\circ\psi_{n})<\frac{2}{n}$.
Using forward compactness we again obtain a contradiction with Lemma
\ref{lem:cont preserves 0 weight}, thus completing the proof. 
\end{proof}
Since uniform continuity is a self dual property we obtain:
\begin{cor}
\label{cor:compact domain implies for is bac}If $X$ is object compact
as well as forward and backward compact then a functor $F:X\to Y$
is forward continuous if, and only if, it is backward continuous. 
\end{cor}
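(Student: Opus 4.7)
The plan is to route both implications through uniform continuity, using the preceding theorem as the substantive input. Suppose $F$ is forward continuous. Since $X$ is assumed to be both forward compact and object compact, the second clause of the theorem immediately yields that $F$ is uniformly continuous. Because uniform continuity is a self-dual property (as noted in the paragraph following its definition), Proposition \ref{prop:unif. cont. implies obj. cont.} gives that $F$ is object backward continuous, i.e.\ backward continuous at every object of $X$. Finally, the earlier proposition asserting that backward continuity at an object $x$ implies backward continuity at every arrow $\psi:x\to z$ upgrades this to backward continuity in the arrow-wise sense, which is exactly what is to be shown.

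The reverse implication proceeds identically with the roles of ``forward'' and ``backward'' interchanged: starting from backward continuity of $F$, the backward compactness of $X$ combined with object compactness again delivers uniform continuity via the theorem, and then Proposition \ref{prop:unif. cont. implies obj. cont.} followed by the object-to-arrow continuity proposition produces forward continuity of $F$.

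There is no genuine obstacle; all the work has already been done in the preceding theorem and in Proposition \ref{prop:unif. cont. implies obj. cont.}. The only role of the corollary is to record that the three compactness hypotheses together are strong enough to collapse the forward/backward dichotomy for continuous functors, by forcing both sides of the dichotomy to factor through the self-dual notion of uniform continuity.
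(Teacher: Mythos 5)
Your proof is correct and takes essentially the same route as the paper's: the corollary is obtained by passing, via the preceding theorem, from forward (respectively backward) continuity plus the compactness hypotheses to uniform continuity, and then using the self-duality of uniform continuity to come back down to continuity of the other kind. The paper compresses this into the single remark that uniform continuity is self-dual; your explicit chain through Proposition \ref{prop:unif. cont. implies obj. cont.} and the object-to-arrow continuity proposition just spells out the steps the paper leaves implicit.
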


\subsection{Mapping spaces}

To construct the mapping metric $1$-space of metric $1$-spaces we
introduce the notion of natural transformations between functors. 
\begin{defn}
Given categories $\mathscr{C}$ and $\mathscr{D}$ and functors $F,G:\mathscr{C}\to\mathscr{D}$
a, \emph{natural transformation }$\alpha:F\to G$ is a family $\{\alpha_{c}:F(c)\to G(c)\}_{c\in ob(\mathscr{C})}$
of arrows in $\mathscr{D}$ such that for every arrow $\psi:c\to c'$
in $\mathscr{C}$ the diagram
\[
\xymatrix{F(c)\ar[d]_{F(\psi)}\ar[r]^{\alpha_{c}} & G(c)\ar[d]^{G(\psi)}\\
F(c')\ar[r]^{\alpha_{c'}} & G(c')
}
\]
commutes. 
\end{defn}
It is well known that given categories $\mathscr{C}$ and $\mathscr{D}$
the collection of all functors $F:\mathscr{C}\to\mathscr{D}$ as objects
and natural transformations $\alpha:F\to G$ as arrows forms a category
known as the \emph{functor category }$\underline{\mathbf{Cat}}(\mathscr{C},\mathscr{D})$.
We refer to \cite{MR1712872} for more details and just recall here
that the composition (also known as vertical composition) of natural
transformations that turns $\underline{\mathbf{Cat}}(\mathscr{C},\mathscr{D})$
into a category is given, for natural transformations $\alpha:F\to G$
and $\beta:G\to H$, by the family of arrows 
\[
\{\xymatrix{F(c)\ar[r]^{\alpha_{c}} & G(c)\ar[r]^{\beta_{c}} & H(c)}
\}_{c\in ob(\mathscr{C})}.
\]

In the presence of a metric structure on the categories $\mathscr{C}$
and $\mathscr{D}$, any natural transformation $\alpha$ in $\underline{\mathbf{Cat}}(\mathscr{C},\mathscr{D})$
naturally acquires a weight as follows. 
\begin{defn}
Let $X,Y$ be metric $1$-spaces, $F,G:X\to Y$ functors, and $\alpha:F\to G$
a natural transformation. The \emph{weight }of the natural transformation
$\alpha$ is given by the formula
\[
w(\alpha)=\sup_{x\in ob(X)}\{w(\alpha_{x}:F(x)\to G(x)\}.
\]
\end{defn}
\begin{rem}
Notice that it is only the metric structure on $Y$ that is used here.
For this definition to make sense the functors $F$ and $G$ may be
any functors at all and $X$ can be an arbitrary category. However,
we will not need this extra generality. 
\end{rem}
Given metric $1$-spaces $X$ and $Y$ it is natural to consider a
subcategory of the category $\underline{\mathbf{Cat}}(X,Y)$, the
one spanned by continuous functors $F:X\to Y$. We are most interested
in the case where $X$ is object compact as well as forward and backward
compact. Corollary \ref{cor:compact domain implies for is bac} then
shows that forward and backward continuity coincide. In that case
we will simply say a functor is continuous. 
\begin{defn}
Let $X$ and $Y$ be metric $1$-spaces with $X$ object compact as
well as forward and backward compact. The \emph{mapping $1$-space}
$[X,Y]$ is the subcategory of $\underline{\mathbf{Cat}}(X,Y)$ whose
objects are the continuous functors and whose arrows are all natural
transformations between such functors. 
\end{defn}
We now establish that the weights defined above turn $[X,Y]$ into
a metric $1$-space, thus justifying its name.
\begin{lem}
\label{attained}Let $X$ and $Y$ be metric $1$-spaces with $X$
object compact as well as forward and backward compact. If $F,G:X\to Y$
are continuous and $\alpha:F\to G$ is a natural transformation \textup{then
\[
\max_{x\in ob(X)}\{w(\alpha_{x})\}
\]
exists. }\end{lem}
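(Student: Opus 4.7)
The plan is to show that $s = \sup_{x \in ob(X)} w(\alpha_{x})$ is attained at some distinguished object $x_{0}$. First I would pick a sequence $\{x_{n}\}_{n=1}^{\infty}$ of objects with $w(\alpha_{x_{n}}) \to s$, and then extract from object compactness (Definition \ref{Def:obj compac}) a distinguished object $x_{0} \in ob(X)$ having the property that, for every $\delta>0$ and every $N$, there is some $n>N$ admitting an arrow $\psi : x_{n} \to x_{0}$ with $w(\psi) < \delta$. My candidate for where the supremum is attained will be this $x_{0}$.

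Next I would transfer the weight $w(\alpha_{x_{n}})$ onto $w(\alpha_{x_{0}})$ using the combined force of uniform continuity and naturality. Since $X$ is object, forward, and backward compact and both $F$ and $G$ are continuous, the preceding uniform-continuity theorem tells me that $F$ and $G$ are uniformly continuous, so for any $\epsilon>0$ I can choose a common $\delta>0$ such that $w(F(\psi)) < \epsilon$ and $w(G(\psi)) < \epsilon$ whenever $w(\psi) < \delta$. Applying object compactness to this $\delta$ and a sufficiently large $N$, I pick $n > N$ with $w(\alpha_{x_{n}}) > s - \epsilon$ and an arrow $\psi : x_{n} \to x_{0}$ with $w(\psi) < \delta$. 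Naturality of $\alpha$ gives the identity $G(\psi) \circ \alpha_{x_{n}} = \alpha_{x_{0}} \circ F(\psi)$, and applying the full triangle inequality on each side yields
\[
w(\alpha_{x_{n}}) - w(G(\psi)) \le w(G(\psi) \circ \alpha_{x_{n}}) = w(\alpha_{x_{0}} \circ F(\psi)) \le w(\alpha_{x_{0}}) + w(F(\psi)),
\]
so $w(\alpha_{x_{n}}) \le w(\alpha_{x_{0}}) + 2\epsilon$. Combining this with $w(\alpha_{x_{n}}) > s - \epsilon$ gives $s < w(\alpha_{x_{0}}) + 3\epsilon$; letting $\epsilon \to 0$ forces $s \le w(\alpha_{x_{0}})$, and since the reverse inequality is automatic, the maximum is attained at $x_{0}$.

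The main difficulty is not conceptual but coordinative: I must interlock three ingredients — the witness $x_{0}$ furnished by object compactness, the uniform continuity of both $F$ and $G$, and the convergence $w(\alpha_{x_{n}}) \to s$ — through a single $\epsilon$. It is worth emphasising that the full triangle inequality is used in both of its directions inside the naturality square, and it is the lower estimate $w(\alpha_{x_{n}}) - w(G(\psi)) \le w(G(\psi) \circ \alpha_{x_{n}})$ that carries the genuine content of the argument. A minor technical wrinkle, handled uniformly, is the case $s = \infty$: there I simply replace ``$w(\alpha_{x_{n}}) > s - \epsilon$'' by ``$w(\alpha_{x_{n}})$ arbitrarily large'' and read the concluding inequality as forcing $w(\alpha_{x_{0}}) = \infty$.
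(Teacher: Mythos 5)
Your proposal is correct and follows essentially the same route as the paper's proof: a maximizing sequence, the witness object $x_{0}$ from object compactness, uniform continuity of $F$ and $G$ via the preceding theorem, and both halves of the full triangle inequality applied across the naturality square $G(\psi)\circ\alpha_{x_{n}}=\alpha_{x_{0}}\circ F(\psi)$. The only difference is presentational: you argue directly by letting $\epsilon\to 0$, whereas the paper runs the same estimates as an $\epsilon/3$ contradiction.
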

\begin{proof}
Let 
\[
S=\sup_{x\in ob(X)}\{w(\alpha_{x})\}
\]
which certainly exists and assume $S<\infty$. Now, let $\{x_{n}\}_{n=1}^{\infty}$
be a sequence of objects in $ob(X)$ such that $w(\alpha_{x_{n}})$
converges (in the usual sense in $\mathbb{R}$) to $S$. By object
compactness we may assume the existence of an object $x_{0}$ as in
Definition \ref{Def:obj compac}. We claim that $w(\alpha_{x_{0}})=S$,
which will prove the result. Clearly $w(\alpha_{x_{0}})\le S$, so
assume that $w(\alpha_{x_{0}})<S$ and let $\epsilon=S-w(\alpha_{x_{0}})$.
Since $F$ and $G$ are uniformly continuous find $\delta>0$ such
that for every $\psi:x\to x_{0}$ with $w(\psi)<\delta$ holds that
$w(F(\psi))<\frac{\epsilon}{3}$ and $w(G(\psi))<\frac{\epsilon}{3}$.
Now, let $n_{0}\in\mathbb{N}$ be such that for all $n>n_{0}$ holds
that $S-w(\alpha_{x_{n}})<\frac{\epsilon}{3}$. We may find an arrow
$\psi:x_{m}\to x_{0}$ such that both $w(\psi)<\delta$ and $m>n_{0}$
hold. In the corresponding naturality square
\[
\xymatrix{F(x_{m})\ar[r]^{F(\psi)}\ar[d]_{\alpha_{x_{m}}}\ar@{..>}[dr]^{\varphi} & F(x_{0})\ar[d]^{\alpha_{x_{0}}}\\
G(x_{m})\ar[r]^{G(\psi)} & G(x_{0})
}
\]
let $\varphi:F(x_{m})\to G(x_{0})$ be the common composition. Then,
by the (restricted) triangle inequality, $w(\varphi)\le w(F(\psi))+w(\alpha_{x_{0}})<\frac{\epsilon}{3}+w(\alpha_{x_{0}})=S-\frac{2}{3}\epsilon$.
However, the full triangle inequality yields that $w(\varphi)\ge w(\alpha_{x_{m}})-w(G(\psi))>S-\frac{\epsilon}{3}-\frac{\epsilon}{3}=S-\frac{2}{3}\epsilon$,
a contradiction. The proof when $S=\infty$ is similar. \end{proof}
\begin{thm}
\label{thm:Again metric}Let $X$ and $Y$ be metric $1$-spaces with
$X$ object compact as well as forward and backward compact. The mapping
$1$-space $[X,Y]$, with the weights defined above, is then a metric
$1$-space.\end{thm}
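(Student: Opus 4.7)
The plan is to check the two axioms in the definition of a metric $1$-space for the weight $w$ on natural transformations: reflexivity, i.e.\ $w(\mathrm{id}_F)=0$ for every continuous $F:X\to Y$, and the full triangle inequality for any composable pair $\alpha:F\Rightarrow G$, $\beta:G\Rightarrow H$ in $[X,Y]$. Reflexivity is immediate: the components of $\mathrm{id}_F$ are the identity arrows $\mathrm{id}_{F(x)}$, each of weight $0$ by reflexivity of $w$ in $Y$, hence the supremum (in fact maximum by Lemma \ref{attained}) is $0$.

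For the upper half of the triangle inequality, note that the vertical composite $\beta\circ\alpha$ has components $(\beta\circ\alpha)_x=\beta_x\circ\alpha_x$. Applying the full triangle inequality in $Y$ pointwise at each object $x\in ob(X)$ yields
\[
w(\beta_x\circ\alpha_x)\le w(\beta_x)+w(\alpha_x)\le w(\beta)+w(\alpha),
\]
and taking the supremum over $x$ gives $w(\beta\circ\alpha)\le w(\beta)+w(\alpha)$, with the convention on $\infty$ causing no trouble.

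The main obstacle, and the step which really requires the compactness hypotheses on $X$, is the lower half $|w(\beta)-w(\alpha)|\le w(\beta\circ\alpha)$. Here Lemma \ref{attained} is essential: without it, a supremum attained only in the limit would not combine cleanly with the pointwise triangle inequality in $Y$. Assuming without loss of generality that $w(\beta)\ge w(\alpha)$, invoke Lemma \ref{attained} to obtain an object $x_{0}\in ob(X)$ with $w(\beta_{x_{0}})=w(\beta)$. The full triangle inequality in $Y$ applied to $\alpha_{x_{0}}:F(x_{0})\to G(x_{0})$ and $\beta_{x_{0}}:G(x_{0})\to H(x_{0})$ gives
\[
w(\beta_{x_{0}})-w(\alpha_{x_{0}})\le w(\beta_{x_{0}}\circ\alpha_{x_{0}})=w((\beta\circ\alpha)_{x_{0}})\le w(\beta\circ\alpha).
\]
Since $w(\alpha_{x_{0}})\le w(\alpha)$, the left-hand side is at least $w(\beta)-w(\alpha)$, and the desired inequality follows.

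A small remaining issue is to handle the cases where some of $w(\alpha),w(\beta),w(\beta\circ\alpha)$ equal $\infty$. When $w(\beta)=\infty$, Lemma \ref{attained} still delivers an object at which the weight is infinite (the $S=\infty$ case treated at the end of its proof), so the argument above shows $w(\beta\circ\alpha)=\infty$ as required by the convention in Remark \ref{Remark:convention infty}; symmetrically for $\alpha$. The case $w(\alpha)=w(\beta)=\infty$ imposes no lower bound on $w(\beta\circ\alpha)$ and so is automatic. This completes the verification.
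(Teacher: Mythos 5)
Your proof is correct and takes essentially the same route as the paper's: reflexivity is immediate, the restricted inequality follows from the pointwise triangle inequality in $Y$, and Lemma \ref{attained} is invoked to realize a supremum as an attained maximum in order to get the lower half $|w(\beta)-w(\alpha)|\le w(\beta\circ\alpha)$. The only differences are cosmetic: the paper chooses objects realizing all three weights $w(\alpha)$, $w(\beta)$, $w(\beta\circ\alpha)$ and manipulates from there, whereas you use the supremum directly for the upper half and a single attained maximum (after a legitimate symmetry reduction) for the lower half, and you additionally spell out the $\infty$ cases that the paper leaves implicit.
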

\begin{proof}
It is trivial that the identity natural transformation $id_{F}:F\to F$
has $w(id_{F})=0$ for every $F:X\to Y$ in $[X,Y]$. Now, given natural
transformations $\xymatrix{F\ar[r]^{\alpha} & G\ar[r]^{\beta} & H}
$, we need to verify that the inequalities 
\[
|w(\alpha)-w(\beta)|\le w(\beta\circ\alpha)\le w(\alpha)+w(\beta)
\]
hold . Let $x_{1},x_{2},x_{3}\in ob(X)$ be objects such that, 
\[
w(\alpha)=w(\alpha_{x_{1}}:F(x_{1})\to G(x_{1})),
\]
\[
w(\beta)=w(\beta_{x_{2}}:G(x_{2})\to H(x_{2})),
\]
and
\[
w(\beta\circ\alpha)=w(\beta_{x_{3}}\circ\alpha_{x_{3}}:F(x_{3})\to H(x_{3})).
\]
We now obtain
\[
w(\beta\circ\alpha)=w(\beta_{x_{3}}\circ\alpha_{x_{3}})\le w(\beta_{x_{3}})+w(\alpha_{x_{3}})\le w(\alpha)+w(\beta)
\]
establishing the restricted part of the triangle inequality. To obtain
the full triangle inequality we need to show that 
\[
w(\alpha)\le w(\beta\circ\alpha)+w(\beta)
\]
and that 
\[
w(\beta)\le w(\beta\circ\alpha)+w(\alpha).
\]
Indeed, 
\[
w(\beta\circ\alpha)+w(\beta)=w(\beta_{x_{3}}\circ\alpha_{x_{3}})+w(\beta)\ge w(\beta_{x_{1}}\circ\alpha_{x_{1}})+w(\beta)\ge w(\alpha_{x_{1}})-w(\beta_{x_{1}})+w(\beta)
\]
and since $w(\beta)\ge w(\beta_{x_{1}})$, by definition of $\beta$,
it follows that 
\[
w(\alpha_{x_{1}})-w(\beta_{x_{1}})+w(\beta)\ge w(\alpha_{x_{1}})=w(\alpha).
\]
The other required inequality follows similarly. 
\end{proof}
Lemma \ref{attained} and Theorem \ref{thm:Again metric} above point
already to a significant difference between our metric $1$-spaces
and the more classical Lawvere spaces, as one can easily construct
a counter example showing that mapping spaces of Lawvere spaces are
generally not Lawvere spaces, even under compactness conditions.

\subsection{The Banach fixed point theorem}

Since categories are more involved than sets, the needed condition
for the Banach fixed point result for a self map of a metric $1$-space,
presented below, requires both a categorical component and a metric
component. The metric condition of a contraction is the obvious one.
\begin{defn}
Let $F:X\to X$ be a functor from a metric $1$-space $X$ to itself.
$F$ is called a \emph{contraction }if there exists a real number
$0\le\alpha<1$ such that the inequality $w(F(\psi))\le\alpha\cdot w(\psi)$
holds for every arrow $\psi$ in $X$. 
\end{defn}
The categorical condition for a contraction is the following one.
\begin{defn}
Let $F:\mathscr{C}\to\mathscr{C}$ be a functor. A \emph{forward natural
contraction }of $F$ is a natural transformation $\alpha:id_{\mathscr{C}}\to F$
such that for every object $c\in ob(\mathscr{C})$ the equation $F(\alpha_{c})=\alpha_{Fc}$
holds. Dually, a \emph{backward natural contraction }of $F$ is a
natural transformation $\alpha:F\to id_{\mathscr{C}}$ such that for
every object $c\in ob(\mathscr{C})$ the equality $F(\alpha_{c})=\alpha_{Fc}$
holds.\end{defn}
\begin{rem}
Recall that in a category an arrow is called an \emph{epimorphism
}if it is right cancelable and a \emph{monomorphism} if it if left
cancelable. To justify the terminology in the definition above note
that a natural transformation $\alpha:id_{\mathscr{C}}\to F$ which
is an epimorphism is automatically a forward natural contraction.
Dually any natural transformation $\alpha:F\to id_{\mathscr{C}}$
which is a monomorphism is automatically a backward natural contraction.
In many categories epimorphisms correspond to surjections and monomorphisms
to injections. Thus an epimorphic $\alpha:id_{\mathscr{C}}\to F$,
being a family of epimorphisms $c\to Fc$, shows that $c$ 'surjects'
to $Fc$. Similarly, a monomorphic $\alpha:F\to id_{\mathscr{C}}$,
being a family of monomorphisms $Fc\to c$, shows that $Fc$ injects
into $c$. In both cases, $F$ is, in some sense, contracting. 
\end{rem}
Clearly, when viewing an ordinary metric space $S$ as a metric $1$-space
the notion of contraction just defined extends the classical notion.
Moreover, any function $f:S\to S$ gives rise to a corresponding functor
$F:I_{S}\to I_{S}$ which always admits a unique forward natural contraction
and a unique backward natural contraction. 
\begin{defn}
Let $F:\mathscr{C}\to\mathscr{C}$ be a functor and $\alpha:id_{\mathscr{C}}\to F$
a forward natural contraction. An arrow $\psi:c\to d$ is an $\alpha$-\emph{fixed
arrow }if $F(d)=d$ and the diagram
\[
\xymatrix{c\ar[rr]^{\alpha_{c}}\ar[dr]_{\psi} &  & Fc\ar[dl]^{F\psi}\\
 & F(d)=d
}
\]
commutes .
\end{defn}
The dual notion is that of an $\alpha$-fixed arrow for a backward
natural contraction $\alpha$. We may now state and prove the Banach
fixed point theorem for metric $1$-spaces. For simplicity we state
it for a non-degenerate metric $1$-space.
\begin{thm}
Let $X$ be a forward (respectively backward) complete non-degenerate
metric $1$-space. If $F:X\to X$ is a forward (respectively backward)
continuous contraction and $\alpha$ is a forward (respectively backward)
natural contraction of $F$ then $F$ has an $\alpha$-fixed arrow.
In particular $F$ has a fixed object. \end{thm}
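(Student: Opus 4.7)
Writing $\lambda\in[0,1)$ for the contraction constant (to avoid a notational clash with the natural transformation $\alpha$), the plan is to iterate $F$ starting from an arbitrary seed object, organize the iterates into a forward series glued by the components of $\alpha$, extract a limit via forward completeness, and finally use naturality of $\alpha$ to convert a vanishing-weight conclusion into the required fixed-arrow equation.

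Concretely, pick any $c_0\in ob(X)$ and set $c_{n+1}=F(c_n)$ and $\psi_n=\alpha_{c_n}:c_n\to c_{n+1}$ for $n\ge 0$. The forward natural contraction condition $F(\alpha_c)=\alpha_{F(c)}$ gives $\psi_{n+1}=\alpha_{F(c_n)}=F(\alpha_{c_n})=F(\psi_n)$, so the contraction property iterates to $w(\psi_n)\le\lambda^n w(\psi_0)$. Since the codomain of $\psi_n$ equals the domain of $\psi_{n+1}$, the family $\{\psi_n\}_{n=0}^{\infty}$ is a forward series in $X$, and the restricted triangle inequality together with the geometric bound $\sum_{k=m}^{n}\lambda^{k}w(\psi_0)\le \lambda^{m}w(\psi_0)/(1-\lambda)$ shows it is Cauchy in the sense of the paper's definition. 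Forward completeness then produces an object $c_{\infty}$, a limiting arrow $\mu_0=\bigcirc_{n=0}^{\infty}\psi_n:c_0\to c_{\infty}$, and a forward limiting cone $\{\mu_n:c_n\to c_{\infty}\}_{n=0}^{\infty}$ with $\mu_n=\mu_{n+1}\circ\psi_n$ and $w(\mu_n)\to 0$.

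The decisive step is naturality of $\alpha$ applied to each $\mu_n:c_n\to c_{\infty}$, which yields $\alpha_{c_{\infty}}\circ\mu_n=F(\mu_n)\circ\alpha_{c_n}=F(\mu_n)\circ\psi_n$. The restricted triangle inequality together with the contraction bound $w(F(\mu_n))\le\lambda\,w(\mu_n)$ forces $w(\alpha_{c_{\infty}}\circ\mu_n)\le\lambda\,w(\mu_n)+\lambda^{n}w(\psi_0)\to 0$, and the full triangle inequality then gives $w(\alpha_{c_{\infty}})\le w(\alpha_{c_{\infty}}\circ\mu_n)+w(\mu_n)\to 0$, so $w(\alpha_{c_{\infty}})=0$. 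Non-degeneracy (in the natural categorical lift of the classical condition, whereby a weight-zero arrow must be an identity) upgrades this to $\alpha_{c_{\infty}}=\mathrm{id}_{c_{\infty}}$, which in particular pins down $F(c_{\infty})=c_{\infty}$ as a fixed object. Plugging this back into the naturality equation at $\mu_0$ gives $F(\mu_0)\circ\alpha_{c_0}=\alpha_{c_{\infty}}\circ\mu_0=\mu_0$, so $\mu_0$ is an $\alpha$-fixed arrow.

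The main obstacle is not any single computation but the interplay between the two ingredients: the contraction is what makes the $\psi_n$ shrink geometrically and drives the Cauchy estimate, whereas it is naturality of $\alpha$, together with the dynamical condition $F(\alpha_c)=\alpha_{F(c)}$, that links the metric limit $c_{\infty}$ back to the iteration and converts the metric statement $w(\alpha_{c_{\infty}})=0$ into the categorical statement of a fixed object. Subordinate to this is the need to commit to a precise reading of non-degeneracy for metric $1$-spaces; any reading strong enough to recover the classical conclusion when restricted to indiscrete categories will suffice. The backward case is obtained by formal dualization: replace $\alpha:\mathrm{id}_{X}\to F$ by $\alpha:F\to\mathrm{id}_{X}$, forward series by backward series, and read all the diagrams above in $X^{op}$.
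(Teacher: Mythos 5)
Your proposal is correct, and it follows the paper's overall strategy: seed an object, form the forward series $\psi_n=\alpha_{F^n x}$ (your $\psi_{n+1}=F(\psi_n)$ identity is exactly the paper's), use completeness to get a limiting cone, show that the component of $\alpha$ at the limit object has weight $0$, and invoke non-degeneracy to turn this into $\alpha_{c_\infty}=\mathrm{id}$ and hence a fixed object and an $\alpha$-fixed arrow. Where you genuinely diverge is in how $w(\alpha_{c_\infty})=0$ is obtained. The paper applies $F$ to the whole limiting cone, uses the hypothesis that $F$ is forward continuous (via Theorem \ref{thm:mapping preserves limits}) to see that $\{F\mu_k\}$ is a limiting cone for the truncated series (Lemma \ref{lem:truncations of series}), exhibits $\alpha_y$ as a mediating arrow between the two cones, and then quotes Lemma \ref{lem:mediating is 0 for series}. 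You instead write down the naturality square of $\alpha$ at each cone leg $\mu_n$, getting $\alpha_{c_\infty}\circ\mu_n=F(\mu_n)\circ\psi_n$, and kill $w(\alpha_{c_\infty})$ by a direct estimate combining the contraction bound $w(F\mu_n)\le\lambda w(\mu_n)$ with the full triangle inequality. This is more elementary: it bypasses the two convergence lemmas and never actually uses forward continuity of $F$ (only contractivity, which implies it), whereas the paper's route is more structural and reuses its general machinery about cones and mediating arrows. Your closing step (plugging $\alpha_{c_\infty}=\mathrm{id}$ back into naturality at $\mu_0$ to get $F(\mu_0)\circ\alpha_{c_0}=\mu_0$) is a cleaner way to exhibit the $\alpha$-fixed arrow than the paper's rewriting of the cone as $\{F^k\mu_0\}$. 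One shared caveat, not a gap relative to the paper: both arguments assert Cauchyness from the geometric bound $w(\psi_n)\le\lambda^n w(\psi_0)$, which is only informative when $w(\alpha_{x})<\infty$ at the seed; since the paper's own proof makes the same tacit assumption, your matching it is fine, and your explicit reading of non-degeneracy (weight-zero arrows are identities) is exactly the one the paper uses.
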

\begin{proof}
The backward case is the dual of the forward case, which is the one
we prove. Fix an object $x\in ob(X)$ and construct the forward series
$\{\psi_{n}=\alpha_{F^{n}x}\}_{n=1}^{\infty}$. Note that $F\psi_{n}=\psi_{n+1}$
holds for every $n\ge0$, which thus clearly shows that the forward
series is Cauchy. The following diagram depicts this forward series
together with a forward limiting arrow $\mu_{0}:x\to y$ and a forward
limiting cone for it.
\[
\xymatrix{x\ar[r]^{\psi_{0}}\ar[ddrr]_{\mu_{0}} & Fx\ar[r]^{\psi_{1}}\ar[ddr]_{\mu_{1}} & F^{2}x\ar[r]^{\psi_{2}}\ar[dd]^{\mu_{2}} & F^{3}x\ar[ddl]^{\mu_{3}}\ar[r] & \cdots\ar@{}[ddll]^{\cdots} & F^{k}x\ar[ddlll]^{\mu_{k}}\ar[r] & \cdots\\
 &  &  &  &  & \cdots\\
 &  & y
}
\]
Applying $F$ to the diagram above yields
\[
\:\xymatrix{Fx\ar[r]^{\psi_{1}}\ar@<-2pt>[ddrr]_{F\mu_{0}} & F^{2}x\ar[r]^{\psi_{2}}\ar[ddr]_{F\mu_{1}} & F^{3}x\ar[r]^{\psi_{3}}\ar[dd]^{F\mu_{2}} & F^{4}x\ar[ddl]^{F\mu_{3}}\ar[r] & \cdots\ar@{}[ddll]^{\cdots} & F^{k+1}x\ar[ddlll]^{F\mu_{k}}\ar[r] & \cdots\\
 &  &  &  &  & \cdots\\
 &  & Fy
}
\]
which, since $F$ is forward continuous, is a convergent forward cone.
We may now form the following diagram
\[
\xymatrix{Fx\ar[r]^{\psi_{1}}\ar[ddrr]\ar@{..>}[ddrrrr] & F^{2}x\ar[r]^{\psi_{2}}\ar[ddr]\ar@{..>}[ddrrr] & F^{3}x\ar[r]^{\psi_{3}}\ar[dd]\ar@{..>}[ddrr] & F^{4}x\ar[ddl]\ar[r]\ar@{..>}[ddr] & \cdots & F^{k+1}x\ar[ddlll]\ar[r]\ar@{..>}[ddl] & \cdots\\
 &  &  &  &  & \cdots\\
 &  & y\ar[rr]^{\alpha_{y}} &  & Fy
}
\]
where the solid arrows are the arrows $\{\mu_{k}\}_{k=1}^{\infty}$
and the dashed arrows are $\{F\mu_{k}\}_{k=0}^{\infty}$, which we
claim commutes. To establish that, we need to show that for each $k\ge1$
the triangle
\[
\xymatrix{ & F^{k}x\ar[dl]_{\mu_{k}}\ar[dr]^{F\mu_{k-1}}\\
y\ar[rr]_{\alpha_{y}} &  & Fy
}
\]
commutes. Since $\mu_{k-1}=\mu_{k}\circ\psi_{k-1}$ and since $F\psi_{k-1}=\psi_{k}=\alpha_{F^{k}x}$,
the triangle becomes
\[
\xymatrix{F^{k}x\ar[d]_{\mu_{k}}\ar[r]^{\alpha_{F^{k}x}} & F^{k+1}x\ar[d]^{F\mu_{k}}\\
y\ar[r]_{\alpha_{y}} & Fy
}
\]
which indeed commutes since $\alpha$ is a natural transformation.
We thus established, using Lemma \ref{lem:truncations of series},
that the arrow $\alpha_{y}:y\to Fy$ is a mediating arrow for two
forward limiting cones for the same forward series and thus conclude,
by Lemma \ref{lem:mediating is 0 for series}, that $w(\alpha_{y})=0$.
Non-degeneracy of $X$ now implies that $\alpha_{y}=id$. We may thus
conclude that the first diagram above can be rewritten as
\[
\xymatrix{x\ar[r]^{\psi_{0}}\ar@<-3.5pt>[ddrr]_{\mu_{0}} & Fx\ar[r]^{\psi_{1}}\ar[ddr]_{F\mu_{0}} & F^{2}x\ar[r]^{\psi_{2}}\ar[dd]^{F^{2}\mu_{0}} & F^{3}x\ar[ddl]^{F^{3}\mu_{0}}\ar[r] & \cdots\ar@{}[ddll]^{\cdots} & F^{k}x\ar[ddlll]^{F^{k}\mu_{0}}\ar[r] & \cdots\\
 &  &  &  &  & \cdots\\
 &  & y
}
\]
and in particular the left most triangle exhibits $\mu_{0}$ as an
$\alpha$-fixed arrow. 
\end{proof}

\section{\label{sec:Dagger-structures-and}Dagger structures and a hierarchy
of symmetry}

We now turn to consider metric $1$-spaces with extra structure, namely
a dagger structure. We consider three conditions a dagger can satisfy
and identify a four level hierarchy of symmetry for metric $1$-spaces.
Interestingly, the canonical embedding of ordinary metric spaces as
metric $1$-spaces with indiscrete underlying categories takes values
in the lowest, most symmetric, level which shows again the inevitability
of symmetry discussed in Section \ref{sec:Reformulating-the-classical}. 

Recall, that a dagger category is a category equipped with an involution.
More precisely:
\begin{defn}
A \emph{dagger }structure on a category $\mathscr{C}$ is a functor
$\dagger:\mathscr{C}\to\mathscr{C}^{op}$ which is the identity on
objects and (writing $f^{\dagger}$ instead of $\dagger(f)$) so that
$f^{\dagger\dagger}=f$ holds for every arrow $f$ in $\mathscr{C}$.
A pair $(\mathscr{C},\dagger)$, with $\dagger$ a dagger structure
on $\mathscr{C}$, is called a \emph{dagger category. }\end{defn}
\begin{example}
This example will allow us later to interpret the Gromov-Hausdorff
distance as a suitable weight function $w$ on the category of cospans
of a category of metric spaces. If a category $\mathscr{C}$ admits
pushouts then one can construct its category of cospans. A cospan
in $\mathscr{C}$ is a diagram of the form
\[
\xymatrix{c\ar[dr]_{f} &  & c'\ar[dl]^{g}\\
 & d
}
\]
and can be thought of as a generalized arrow from $c$ to $c'$. Given
another cospan from $c'$ to $c''$ one can construct a cospan from
$c$ to $c''$ by considering the diagram
\[
\xymatrix{c\ar[dr] &  & c'\ar[dl]\ar[dr] &  & c''\ar[dl]\\
 & d\ar[dr] &  & d'\ar[dl]\\
 &  & e
}
\]
where the bottom diamond is a pushout. The generally arbitrary choice
of a particular object $e$ for the pushout implies that this composition
will only be associative up to an isomorphism. Thus, to obtain an
honest category one needs to consider the evident equivalence classes
of cospans. Once this is done one obtains the category $coSpan(\mathscr{C})$
whose objects are the objects of $\mathscr{C}$ and an arrow $c\to c'$
is an equivalence class $[(f,g)]$ of cospan (another possibility
is to construct a weak $2$-category of cospans). Note that $coSpan(\mathscr{C})$
has an obvious dagger structure sending $[(f,g)]$ to $[(g,f)]$.\end{example}
\begin{rem}
Dagger categories with some extra structure (compact closed) are studied
in \cite{abramsky2004categorical} (called there strongly compact
closed categories) to allow for an abstract categorical presentation
of quantum computations.
\end{rem}
A \emph{groupoid }is a category $\mathscr{C}$ in which all arrows
are isomorphisms. A groupoid is the many objects version of a group
in the sense that a groupoid with just one object is essentially a
group. Any groupoid has a canonical dagger structure since one can
always define $\dagger:\mathcal{G}\to\mathcal{G}^{op}$ to be the
identity on objects and to send an arrow $f:a\to b$ in $\mathcal{G}$
to its (by assumption existing and necessarily unique) inverse $f^{-1}:b\to a$. 

If $X$ is a metric $1$-space then $X^{op}$ is obviously a metric
$ $1-space by defining $w(\psi^{op}:y\to x)=w(\psi:x\to y)$ for
every $\psi^{op}$ in $X^{op}$. The following definition distinguishes
different ways that a dagger structure on $X$ can be compatible with
this metric. 
\begin{defn}
Let $X$ be a metric $1$-space. A dagger structure $\dagger:X\to X^{op}$
is called \end{defn}
\begin{itemize}
\item an \emph{iso }dagger\emph{ }structure if $w(\psi^{\dagger})=w(\psi)$
holds for every arrow $\psi$ in $X$.
\item a \emph{uniform }dagger structure if the functor $\dagger:X\to X^{op}$
is uniformly continuous.
\item a \emph{continuous }dagger structure if the functor $\dagger:X\to X^{op}$
is both forward and backward continuous.
\end{itemize}
A few comments are in order. The properties listed above are clearly
written in decreasing strength. Note, that the apparent weaker property
of a dagger structure satisfying $w(\psi^{\dagger})\le w(\psi)$ is
in fact equivalent to the iso dagger property. Note as well that each
of these properties is self dual in the sense that if $\dagger:X\to X^{op}$
is iso, uniform, or continuous then so is $\dagger^{op}:X^{op}\to X$.
Lastly, it is possible to introduce another property in between iso
and uniform, namely that $\dagger$ satisfies a Lipschitz condition. 
\begin{prop}
If $X$ is a metric $1$-space with underlying groupoid then the canonical
dagger structure on $X$ is an iso dagger structure. \end{prop}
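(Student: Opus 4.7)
The plan is simply to invoke Corollary \ref{cor:wf is wfinv}, which already does all the work. Recall that that corollary asserts $w(\psi) = w(\psi^{-1})$ for any arrow $\psi$ that happens to be an isomorphism in a metric $1$-space, and its proof was a one-line application of the full triangle inequality: $|w(\psi^{-1}) - w(\psi)| \le w(\psi^{-1} \circ \psi) = w(id) = 0$.

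First I would unwind the hypothesis. If the underlying category of $X$ is a groupoid, then by definition every arrow $\psi$ of $X$ is an isomorphism, so it has a (unique) inverse $\psi^{-1}$. The canonical dagger structure on a groupoid is the one described just before the proposition, namely the identity on objects and sending each arrow $\psi$ to $\psi^{-1}$; so for every arrow $\psi$ of $X$ we have $\psi^{\dagger} = \psi^{-1}$.

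Second, I would fix an arbitrary arrow $\psi$ of $X$ and apply Corollary \ref{cor:wf is wfinv} to conclude
\[
w(\psi^{\dagger}) = w(\psi^{-1}) = w(\psi).
\]
Since $\psi$ was arbitrary, this is exactly the iso dagger condition from the preceding definition, completing the proof.

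There is no real obstacle here; the content of the statement has already been absorbed by Corollary \ref{cor:wf is wfinv}, and the role of the proposition is only to recognize that the ``iso'' condition among the three compatibility levels introduced in the definition is automatically satisfied by the canonical dagger on any groupoid. The only thing to be slightly careful about is that the canonical dagger is indeed a well-defined dagger structure, i.e., that taking inverses is functorial $X \to X^{op}$ and involutive, but this is standard and was remarked before the statement.
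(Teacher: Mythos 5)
Your proof is correct and is essentially the paper's own argument: the paper's proof consists precisely of the remark ``This is just Corollary \ref{cor:wf is wfinv}.'' You merely spell out the (correct) identification of the canonical dagger with inversion, $\psi^{\dagger}=\psi^{-1}$, before applying that corollary.
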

\begin{proof}
This is just Corollary \ref{cor:wf is wfinv}.
\end{proof}
Corollary \ref{cor:if ind then symmetric} may now be restated as
simply saying that an indiscrete category is trivially a groupoid. 
\begin{prop}
\label{prop:iso dagger implies Lawvere is symm}If $X$ is a metric
$1$-space that admits an iso dagger structure then the Lawvere space
$L(X)$ is a metric space. \end{prop}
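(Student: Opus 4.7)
The goal is to upgrade the Lawvere space $L(X)$ to a metric space, and since a Lawvere space already satisfies reflexivity and the restricted triangle inequality, by Lemma \ref{lem:rest plus ref is full} it is enough to establish symmetry of the induced function $d$ on $\mathrm{ob}(X)$. So the whole plan reduces to showing $d(x,y)=d(y,x)$ for all objects $x,y \in \mathrm{ob}(X)$.

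The idea is to use the dagger as an explicit bijection between the hom-sets $X(x,y)$ and $X(y,x)$ that preserves weight. More precisely, the iso dagger restricts, for each pair of objects $x,y$, to a map $\dagger \colon X(x,y) \to X(y,x)$, and because $\dagger\dagger = \mathrm{id}$ its inverse is the corresponding restriction of $\dagger$ in the other direction. Since $w(\psi^\dagger) = w(\psi)$ for every arrow $\psi$, this bijection puts the sets $\{w(\psi) \mid \psi \colon x \to y\}$ and $\{w(\varphi) \mid \varphi \colon y \to x\}$ in weight-preserving correspondence. Taking infima on both sides then gives
\[
d(x,y) \;=\; \inf_{\psi \colon x \to y} w(\psi) \;=\; \inf_{\psi \colon x \to y} w(\psi^\dagger) \;=\; \inf_{\varphi \colon y \to x} w(\varphi) \;=\; d(y,x),
\]
which is the desired symmetry.

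With symmetry established, Lemma \ref{lem:rest plus ref is full} applied to $d$ on the set $\mathrm{ob}(X)$ (which we already know is reflexive and satisfies the restricted triangle inequality, being a Lawvere space) promotes the restricted triangle inequality to the full triangle inequality, so $L(X)$ is a metric space in the sense of the paper (where non-degeneracy and local finiteness have been dropped). No real obstacle appears here: the entire argument is a one-line manipulation of the defining infimum, and the only content is the self-inverse, weight-preserving nature of $\dagger$ on each hom-set, which is precisely what the iso dagger hypothesis supplies.
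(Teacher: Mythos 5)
Your proof is correct and is essentially the paper's argument: the paper dismisses this as ``immediate from the definition of $L(X)$,'' and what you have written out --- the dagger giving a weight-preserving bijection $X(x,y)\to X(y,x)$, hence equality of the infima and symmetry of $d$ --- is exactly that immediate verification, with the reduction to symmetry via reflexivity and the restricted triangle inequality handled as expected.
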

\begin{proof}
Immediate from the definition of $L(X)$. 
\end{proof}
The following lemma shows that the presence of a continuous dagger
structure implies that forward and backward notions coincide. We leave
the proof to the reader. 
\begin{lem}
\label{lem:lim colim coinc}Let $X$ and $Y$ be metric $1$-spaces,
each with a continuous dagger structure. The following then hold:\end{lem}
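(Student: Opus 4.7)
The plan is to exploit the fact that, for each of $X$ and $Y$, the dagger functor $\dagger : X \to X^{op}$ is involutive and, by assumption, continuous in both directions, so it implements an explicit bijective equivalence between forward and backward structures. First I will observe that a forward sequence $\{\psi_n : x \to x_n\}_{n=0}^{\infty}$ in $X$ is carried by $\dagger$ to a backward sequence $\{\psi_n^{\dagger} : x_n \to x\}_{n=0}^{\infty}$, and dually for series; the involutivity $\dagger\dagger = \mathrm{id}$ makes this correspondence a bijection. Combining this with Theorem \ref{thm:mapping preserves limits} applied to $\dagger$ itself (which preserves forward limits by forward continuity and backward limits by backward continuity) yields a bijective correspondence between forward limiting cones of a sequence or series and backward limiting cones of its dagger, with weights matching along the way.

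Once this core correspondence is in hand, each expected item of the lemma will follow by direct translation. For a functor $F : X \to Y$, forward continuity at an arrow $\psi$ corresponds to backward continuity at $\psi^{\dagger}$ via the identity $\dagger_Y \circ F = F^{op} \circ \dagger_X$ of functors; since both daggers are forward and backward continuous, the two notions of continuity for $F$ coincide. In the same spirit, $X$ should be forward compact if and only if it is backward compact (transport a forward sequence and its convergent subsequence to a backward one through $\dagger$ and back), and forward completeness should be equivalent to backward completeness, using that $\dagger$ carries Cauchy forward series to Cauchy backward series. Analogous statements for limits of sequences and series in $X$ agree with those in $X^{op}$.

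The main obstacle I anticipate is bookkeeping the weight condition $\lim_n w(\rho_n) = 0$ that distinguishes an essential cone from a limiting cone, since continuous, as opposed to uniform or iso, daggers only preserve the limit property pointwise at the limiting arrow. However, Theorem \ref{thm:mapping preserves limits} is phrased at exactly the level needed, so continuity of $\dagger$ at the limiting arrow is what guarantees the image cone remains limiting, while Lemma \ref{lem:cont preserves 0 weight} supplies the additional control at weight-zero arrows needed to match things under the involution. The remaining work is careful matching of indexing categories: $\mathbb{N}_\bullet$ against $\mathbb{N}^\bullet$ and $\mathbb{N}_\rightarrow$ against $\mathbb{N}_\leftarrow$ under the opposite-functor construction, with $\dagger$ consistently swapping domains and codomains so that essential cones of one flavour are carried to essential cones of the dual flavour.
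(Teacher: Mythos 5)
The paper itself leaves this proof to the reader, so there is nothing to compare against; judged on its own, your transport strategy is sound for the compactness item but has two genuine gaps elsewhere. The central one is the identity $\dagger_{Y}\circ F=F^{op}\circ\dagger_{X}$: this is precisely the assertion that $F$ commutes with the daggers, i.e.\ $F(\psi^{\dagger})=(F\psi)^{\dagger}$ for all arrows $\psi$, and nothing in the hypotheses of the lemma gives it --- the statement concerns an arbitrary functor $F:X\to Y$ between metric $1$-spaces that merely happen to carry continuous daggers. Without that identity your translation of the first item breaks down at the decisive step: from a decomposition $\psi=\rho\circ\varphi$ with $w(\varphi)$ small, backward continuity of $\dagger_{X}$ at $\psi$ does give $w(\varphi^{\dagger})$ small, and forward continuity of $F$ at $\psi^{\dagger}=\varphi^{\dagger}\circ\rho^{\dagger}$ then controls $w(F(\varphi^{\dagger}))$; but $F(\varphi^{\dagger})$ need not equal $(F\varphi)^{\dagger}$, and the dagger on $Y$ only relates an arrow of $Y$ to its own dagger, so no continuity of $\dagger_{Y}$ returns you to a bound on $w(F\varphi)$, which is what backward continuity of $F$ at $\psi$ demands. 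You must either add the hypothesis that $F$ is a dagger functor or find an argument for the first item that does not route through $F$ applied to daggered arrows; Lemma \ref{lem:cont preserves 0 weight} does not supply the missing control, since it only concerns arrows of weight exactly $0$.

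The second gap is the completeness item. You assert that $\dagger$ carries Cauchy forward series to Cauchy backward series, but Cauchyness is a condition on the weights of all tail composites $\psi_{m}\circ\cdots\circ\psi_{n}$, which are infinitely many distinct arrows, whereas a continuous dagger only guarantees, for each fixed arrow $\chi$ and each $\epsilon>0$, a $\delta>0$ (depending on $\chi$) controlling the daggers of sufficiently light legs in decompositions of that particular $\chi$. That pointwise control is exactly what makes your compactness argument work --- there the legs whose daggers must be controlled all sit in decompositions of the single limiting arrow, so Theorem \ref{thm:mapping preserves limits} applied to $\dagger:X\to X^{op}$ together with $\dagger\dagger=\mathrm{id}$ does carry a forward limiting cone of the daggered sequence back to a backward limiting cone of the original subsequence --- but it does not transfer Cauchyness, since before convergence is established there is no single arrow whose decompositions contain all the tail composites. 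The transfer you want is immediate for an iso or uniform dagger; under mere continuity (the hypothesis of the lemma, and the weakest of the three properties in the paper's hierarchy) it needs a different justification, which your sketch does not provide.
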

\begin{itemize}
\item A functor $F:X\to Y$ is forward continuous if, and only if, it is
backward continuous.
\item $X$ is forward compact if, and only if, it is backward compact.
\item $X$ is forward complete if, and only if, it is backward complete. 
\end{itemize}
Of course, these results can be refined to consider continuity at
an arrow as well as other notions of compactness and completeness. 

We can thus identify four types of symmetry a metric $1$-space may
posses. If the underlying category is a groupoid then the canonical
dagger structure is automatically an iso dagger structure and we call
such metric $1$-spaces \emph{groupoidal}. Then there are the iso
dagger structures that are not necessarily canonical, followed by
uniform dagger structures and lastly continuous ones. Metric structures
in any of these classes exhibit various degrees of unification of
the forward and backward topological notions and are thus, in a sense,
symmetric. 
\begin{rem}
\label{Rem: Michael}Referring back to Remark \ref{Rem:As-Michael-Lockyer}
we note that a generalization, along the lines carried out above,
of the axiomatization of metric spaces using the strong triangle inequality
would lead to the concept of an iso dagger metric $1$-space. It is
in this sense that we justify our claim that the approach presented
here subsumes the putative generalization of the strong triangle inequality. 
\end{rem}

\section{\label{sec:Examples,-related-structures,}Examples, related structures,
and notes}

In this final section we consider examples of metric $1$-spaces,
related structures, and we point out some of the potential applications
of metric $1$-spaces.

\subsection{Lipschitz distance}

Referring back to Example \ref{Exam:BiLip} above, clearly, every
bi-Lipschitz function is invertible and the inverse function is again
bi-Lipschitz. Thus the underlying category of the metric $1$-space
$\mathbf{BiLip}$ is a groupoid and thus the induced dagger is automatically
an iso dagger. The metric $1$-space $\mathbf{BiLip}$ is thus an
example of a groupoidal metric $1$-space and by Proposition \ref{prop:iso dagger implies Lawvere is symm},
the associated Lawvere space $L(\mathbf{BiLip})$ is a metric space.
The distance in $L(\mathbf{BiLip})$ is identical to the classical
Lipschitz distance.

\subsection{Gromov-Hausdorff distance}

Given a subset $Y$ of a metric space $X$ and $r>0$, let $B_{r}(Y)=\bigcup_{y\in Y}B_{r}(y)$.
Recall that $d_{H}(C,D)$, the Hausdorff distance between subsets
$C,D$ of a metric space $X$, is defined as the infimum over $r>0$
such that both $C\subseteq B_{r}(D)$ and $D\subseteq B_{r}(C)$.
Then, $d_{GH}(X,Y)$, the Gromov-Hausdorff distance between metric
spaces $X$ and $Y$, is defined as the infimum of $d_{H}(i(X),j(Y))$
as $i:X\to Z$ and $j:Y\to Z$ range over all possible isometric embeddings
of $X$ and $Y$ into arbitrary metric spaces $Z$. Ignoring set theoretic
difficulties, one can construct the space $\mathbb{S}$ of all metric
spaces and it is well-known that the Gromov-Hausdorff distance turns
it into a metric space. 

Consider now the category $\mathbf{Met}_{e}$ of all ordinary metric
spaces as objects and isometric embedding as arrows. Note that the
definition of the Gromov-Hausdorff distance $d_{GH}(X,Y)$ can be
restated as being the infimum over all cospans $(i,j)$ 
\[
\xymatrix{X\ar[rd]_{i} &  & Y\ar[ld]^{j}\\
 & Z
}
\]
from $X$ to $Y$ of a the quantity $w(i,j)=d_{H}(i(X),j(Y))$. In
fact, it is not hard to show that this definition of $w$ turns $CoSpan(\mathbf{Met}_{e})$
into a metric $1$-space and that the evident dagger structure on
the cospan category is an iso dagger. It then follows immediately
that we obtain the formula $\mathbb{S}=L(CoSpan(\mathbf{Met}_{e}))$
exhibiting the associated Lawvere space as the space of metric spaces. 
\begin{rem}
In \cite{MR2610438}, the classical Gromov-Hausdorff distance is studied
in the context of quantale enrichment. We point out that it is possible
to define quantale valued metric $1$-spaces and we suspect that the
categorical insights obtained in \cite{MR2610438} would be valid
in that setting too. 
\end{rem}

\subsection{Directed spaces}

Recall from Section \ref{sec:Dagger-structures-and} that a groupoidal
metric $1$-space carries a canonical iso dagger structure and thus
its associated Lawvere space is an ordinary metric space. We propose
that groupoidal metric $1$-spaces $X$ can naturally serve as models
for directed spaces. In any groupoidal metric $1$-space $X$ each
set of arrows $X(x,y)$ is a heap and if we assume $X$ is a connected
groupoid (i.e., every two objects are connected by some arrow) then
all such heaps are isomorphic, let's say to $H$. It can be shown
that the smaller $H$ is the more symmetric the weight function must
be. In more detail, let $\sigma(x,y)=\sup|w(\psi)-w(\varphi)|$ where
$\psi:x\to y$ and $\varphi:y\to x$ range over all possible such
arrows. Then the smaller the size of $H$ is with respect to the cardinality
of $ob(X)$ (we assume now that it is a set) the smaller the upper
bound of $\sigma(x,y)$ is. The extreme case where $|H|=1$ forces
$\sigma(x,y)=0$ and thus the weight function is completely symmetric.
The case where $ob(X)=2$ and $|H|=2$ can easily be analyzed to produce
an exact upper bound for $\sigma(x,y)$, which is different than $0$.
Further research is required to produce workable models of, for instance,
the directed real line modeled by a groupoidal metric $1$-space whose
underlying groupoid has the real numbers as objects and, between any
two real numbers, a continuum of arrows. 
\begin{rem}
We wish to emphasize a conceptual difference between the directed
extension of metric spaces we present here and Grandis' extension
of topology to directed topology (see \cite{MR2562859}). Grandis
defines a $d$-space to be a topological space together with a set
$D$ of \emph{allowed }paths in $D$. Mappings between $d$-spaces
are then required to respect these chosen paths. Thus, underlying
a directed space is an undirected space. In other words, directedness
is more structure. In contrast, in our approach symmetry is seen as
extra structure that might be present on an otherwise non-symmetric
structure. In other words, undirectedness is a property. 
\end{rem}

\subsection{Bi-metric spaces and phase space of interfering measurements}

Consider a metric $1$-space $X$ such that $X(x,y)$, for every two
objects $x,y\in ob(X)$, consists of precisely two arrows which we
denote by $\pm1_{xy}:x\to y$, with composition given by multiplication.
Let us assume that for every $x\in ob(X)$ holds that $w(-1_{xx})=h$,
a constant. If we interpret, for $x\ne y$, $a_{1}=w(1_{xy})$ as
the accuracy in measuring quantity $q_{1}$ from $x$ to $y$ and
$a_{2}=w(-1_{xy})$ as the accuracy of measuring quantity $q_{2}$
from $x$ to $y$ then the full triangle inequality reads
\[
|a_{1}-a_{2}|\le h\le a_{1}+a_{2}
\]
and thus the constant $h$ can be seen as setting lower and upper
bounds the accuracy in simultaneous measurements of different quantities.
The upper bound implies that the quantities measured are related and
the upper bound implies that the measurements interfere. 

The structure described above is certainly related to a set equipped
with two metric structures. Such structures originated with Rosen's
\cite{MR0406361} which gave rise to what is called bi-metric theories
of gravitation. More recent ideas include \cite{MR2802480} on spacetime
as a pseudo-Finslerian bi-metric space and \cite{MR2771706} on quantum
Einstein gravity. Further research is needed to determine the applicability
of metric $1$-spaces to modeling phase space of quantum measurements
and their relation to bi-metric spaces.

\subsection{Metric $n$-spaces }

The passage from metric structures based on sets to metric structures
based on categories naturally raises the question as to metric structures
based on higher dimensional categories. Very loosely speaking, an
$n$-dimensional category has various cells of dimensions $k$ for
$0\le k\le n$ such that each cell has a boundary composed of cells
of lower dimensions 'glued' together. On top of this structure there
is a composition rule that dictates how cells with matching boundaries
can be composed. This composition is required to satisfy some notion
of associativity and typically comes in two flavours: strict and weak.
The combinatorial freedom allowed by higher dimensional cells makes
turning this idea into a rigorous definition highly non-trivial. Indeed,
especially for the weak flavour of higher categories, there are more
than a dozen different definitions proposed and for most of them it
is not yet known if they are equivalent or not. A survey of such definitions
can be found in \cite{MR1883478}. In any case, defining metric $n$-spaces
carries with it all of the difficulties present in the theory of higher
categories and thus we adopt a very informal approach to explain the
following construction. 

Assume that some notion of metric $n$-space is given where such an
$n$-space is an $n$-category in which each cell has a weight together
with some compatibilities. We describe, very non-rigorously, a recursive
construction that, given a metric $n$-space, produces what we call
its associated \emph{$n$-simplex metric.} Thus, let $X$ be a metric
$n$-space and consider $n+1$ objects $x_{0},\cdots,x_{n}$ in $ob(X)$.
If $n=1$ then we set 
\[
d_{1}(x_{0},x_{1})=\inf_{\psi:x_{0}\to x_{1}}\{w(\psi)\}.
\]
This is the same formula defining the associated Lawvere space $L(X)$.
We now call it the associated $1$-simplex metric on $ob(X)$. Assume
that for $1\le k\le n$ we have defined the notion of the $k$-simplex
metric $d_{k}(x_{0},\cdots,x_{k})$ on $ob(X)$ associated to a metric
$k$-space $X$. We now define the $(n+1)$-simplex metric associated
to a metric $(n+1)$-space $X$. Given points $x_{0},\cdots,x_{n}\in ob(X)$
consider for any $n$ of these points $x_{0},\cdots,\hat{x_{j}},\cdots,x_{n}$
the value of $d_{n}(x_{0},\cdots,\hat{x_{j}},\cdots,x_{n})$. Let
us assume for simplicity that it is obtained at a unique $n$-cell
of $X$ which we denote by $\alpha_{j}$. Now, define 
\[
d_{n+1}(x_{0},\cdots,x_{n})=\inf_{\beta}\{w(\beta)\}
\]
where $\beta$ ranges over all $(n+1$)-cells having as boundary the
composition of $\alpha_{0},\cdots,\alpha_{n}$. 

As stated, this is not a rigorous definition and further research
is needed to make it precise and study its properties. We can make
a few remarks we believe to be important since they relate this construction
to existing notions. 

The case $n=1$ clearly gives back the construction $L(X)$ of Lawvere
space associated to a metric $1$-space. The case $n=2$ produces
a notion of metric structure which is related to structures called
$2$-metric spaces studied, for instance, in \cite{MR2254125} and
\cite{aliouche2010fixed} with the latter source containing many references
to other sources. In general, what we call the $n$-simplex metric
is related to a notion called $n$-hemi-metric in \cite{deza2009encyclopedia}.

We conclude this final section by responding to the following question
posed, at the end of the introduction, in \cite{aliouche2010fixed}
by Aliouche and Simpson. In light of the relation between metric spaces
and categories enriched in $\mathbb{R}_{\ge0}$ (namely, that Lawvere
spaces, seen as generalized metric spaces, are precisely categories
enriched in $\mathbb{R}_{\ge0}$ with a suitable monoidal structure)
what is the category theoretical counterpart of $2$-metric spaces
(as they define in their article). In light of the work above we propose
the following.

Consider the category $\mathbf{Set}_{w}$ of weighted sets. A typical
object in it is a set $S$ together with a function $S\to\mathbb{R}_{+}$
assigning a weight $s_{w}$ to each element $s\in S$. A typical arrow
$f:S\to S'$ is then a function such that $f(s)_{w}\le s_{w}$ for
all $s\in S$. A category enriched in $\mathbf{Set}_{w}$ amounts
to a category with a weight for each arrow such that each identity
arrow has weight $0$ and the restricted triangle inequality holds.
Thus, our notion of metric $1$-spaces (with the more restricted class
of non-expanding functors) can be identified with a proper subcategory
of the category of categories enriched in $\mathbf{Set}_{w}$. This
point of view extends the fact that Lawvere spaces are equivalent
to categories enriched in $\mathbb{R}_{+}$ and ordinary metric spaces
are equivalent to a proper subcategory thereof. 

Similarly, one may consider $2$-categories weakly enriched in $\mathbf{Set}_{w}$
where the notion of metric $2$-spaces (with non-expanding functors)
can again be identified with a proper subcategory of such enriched
$2$-categories. Just as Lawvere spaces arise as the $1$-simplex
metric associated to a metric $1$-space we hypothesize that $n$-categories
weakly enriched in $\mathbb{R}_{+}$ arise as the $n$-simplex metric
associated to metric $n$-spaces. 

\bibliographystyle{plain}
\bibliography{references}

\begin{thebibliography}{10}

\bibitem{abramsky2004categorical}
S.~Abramsky and B.~Coecke.
\newblock A categorical semantics of quantum protocols.
\newblock In {\em Logic in Computer Science, 2004. Proceedings of the 19th
  Annual IEEE Symposium on}, pages 415--425. IEEE, 2004.

\bibitem{abramsky1994domain}
S.~Abramsky and A.~Jung.
\newblock Domain theory, volume 3 of handbook of logic in computer science,
  1994.

\bibitem{MR2610438}
A.~Akhvlediani, M.~M. Clementino, and W.~Tholen.
\newblock On the categorical meaning of {H}ausdorff and {G}romov distances.
  {I}.
\newblock {\em Topology Appl.}, 157(8):1275--1295, 2010.

\bibitem{ali2009space}
M.~Ali-Akbari, B.~Honari, M.~Pourmahdian, and MM~Rezaii.
\newblock The space of formal balls and models of quasi-metric spaces.
\newblock {\em Mathematical Structures in Computer Science}, 19(02):337--355,
  2009.

\bibitem{aliouche2010fixed}
A.~Aliouche and C.~Simpson.
\newblock Fixed points and lines in 2-metric spaces.
\newblock {\em Arxiv preprint arXiv:1003.5744}, 2010.

\bibitem{brattka2003recursive}
V.~Brattka.
\newblock Recursive quasi-metric spaces.
\newblock {\em Theoretical computer science}, 305(1-3):17--42, 2003.

\bibitem{MR2572106}
M.~Bukatin, R.~Kopperman, S.~Matthews, and H.~Pajoohesh.
\newblock Partial metric spaces.
\newblock {\em Amer. Math. Monthly}, 116(8):708--718, 2009.

\bibitem{MR1835418}
D.~Burago, Y.~Burago, and S.~Ivanov.
\newblock {\em A course in metric geometry}, volume~33 of {\em Graduate Studies
  in Mathematics}.
\newblock American Mathematical Society, Providence, RI, 2001.

\bibitem{MR2044145}
M.~M. Clementino, D.~Hofmann, and W.~Tholen.
\newblock One setting for all: metric, topology, uniformity, approach
  structure.
\newblock {\em Appl. Categ. Structures}, 12(2):127--154, 2004.

\bibitem{MR2328014}
J.~Collins and J.~Zimmer.
\newblock An asymmetric {A}rzel\`a-{A}scoli theorem.
\newblock {\em Topology Appl.}, 154(11):2312--2322, 2007.

\bibitem{deza2009encyclopedia}
M.~Deza and E.~Deza.
\newblock {\em Encyclopedia of distances}.
\newblock Springer Verlag, 2009.

\bibitem{deza2000quasi}
M.~Deza and E.~Panteleeva.
\newblock Quasi-semi-metrics, oriented multi-cuts and related polyhedra.
\newblock {\em European Journal of Combinatorics}, 21(6):777--795, 2000.

\bibitem{MR2126263}
B.~P. Dvalishvili.
\newblock {\em Bitopological spaces: theory, relations with generalized
  algebraic structures, and applications}, volume 199 of {\em North-Holland
  Mathematics Studies}.
\newblock Elsevier Science B.V., Amsterdam, 2005.

\bibitem{MR0013131}
S.~Eilenberg and S.~MacLane.
\newblock General theory of natural equivalences.
\newblock {\em Trans. Amer. Math. Soc.}, 58:231--294, 1945.

\bibitem{finsler1918ueber}
P.~Finsler.
\newblock {\em Ueber Kurven und Fl{\"a}chen in allgemeinen R{\"a}umen}.
\newblock Leemann \& co., 1918.

\bibitem{MR2562859}
M.~Grandis.
\newblock {\em Directed algebraic topology}, volume~13 of {\em New Mathematical
  Monographs}.
\newblock Cambridge University Press, Cambridge, 2009.
\newblock Models of non-reversible worlds.

\bibitem{gromov2006metric}
M.~Gromov, M.~Katz, P.~Pansu, and S.~Semmes.
\newblock {\em Metric structures for Riemannian and non-Riemannian spaces},
  volume 152.
\newblock Birkhauser, 2006.

\bibitem{hitzler2001generalized}
P.~Hitzler.
\newblock {\em Generalized metrics and topology in logic programming
  semantics}.
\newblock PhD thesis, Faculty of Science National University of Ireland,
  University College Cork, 2001.

\bibitem{MR2013571}
P.~Hitzler and A.~K. Seda.
\newblock Generalized metrics and uniquely determined logic programs.
\newblock {\em Theoret. Comput. Sci.}, 305(1-3):187--219, 2003.
\newblock Topology in computer science (Schlo{\ss} Dagstuhl, 2000).

\bibitem{hitzler2000dislocated}
P.~Hitzler and A.K. Seda.
\newblock Dislocated topologies.
\newblock {\em Journal of Electrical Engineering}, 51:3--7, 2000.

\bibitem{MR0143169}
J.~C. Kelly.
\newblock Bitopological spaces.
\newblock {\em Proc. London Math. Soc. (3)}, 13:71--89, 1963.

\bibitem{lawvere1973metric}
F.W. Lawvere.
\newblock Metric spaces, generalized logic, and closed categories.
\newblock {\em Milan Journal of Mathematics}, 43(1):135--166, 1973.

\bibitem{MR1883478}
T.~Leinster.
\newblock A survey of definitions of {$n$}-category.
\newblock {\em Theory Appl. Categ.}, 10:1--70 (electronic), 2002.

\bibitem{MR1712872}
S.~Mac~Lane.
\newblock {\em Categories for the working mathematician}, volume~5 of {\em
  Graduate Texts in Mathematics}.
\newblock Springer-Verlag, New York, second edition, 1998.

\bibitem{MR2771706}
E.~Manrique, M.~Reuter, and F.~Saueressig.
\newblock Bimetric renormalization group flows in quantum {E}instein gravity.
\newblock {\em Ann. Physics}, 326(2):463--485, 2011.

\bibitem{MR2254125}
Z.~Mustafa and B.~Sims.
\newblock A new approach to generalized metric spaces.
\newblock {\em J. Nonlinear Convex Anal.}, 7(2):289--297, 2006.

\bibitem{randers1941asymmetrical}
G.~Randers.
\newblock On an asymmetrical metric in the four-space of general relativity.
\newblock {\em Physical Review}, 59(2):195, 1941.

\bibitem{MR1173270}
Ivan~L. Reilly.
\newblock On non-{H}ausdorff spaces.
\newblock In {\em Proceedings of the {S}ymposium on {G}eneral {T}opology and
  {A}pplications ({O}xford, 1989)}, volume~44, pages 331--340, 1992.

\bibitem{ribeiro1943espacesa}
H.~Ribeiro.
\newblock Sur les espacesa m{\'e}trique faible.
\newblock {\em Portugaliae Mathematica}, 4:21--40, 1943.

\bibitem{MR2007488}
J.~Roe.
\newblock {\em Lectures on coarse geometry}, volume~31 of {\em University
  Lecture Series}.
\newblock American Mathematical Society, Providence, RI, 2003.

\bibitem{MR2783140}
S.~Romaguera, M.~P. Schellekens, and O.~Valero.
\newblock Complexity spaces as quantitative domains of computation.
\newblock {\em Topology Appl.}, 158(7):853--860, 2011.

\bibitem{MR0406361}
N.~Rosen.
\newblock A bi-metric theory of gravitation.
\newblock {\em General Relativity and Gravitation}, 4(6):435--447, 1973.

\bibitem{rutten1998weighted}
J.J.M.M. Rutten.
\newblock Weighted colimits and formal balls in generalized metric spaces.
\newblock {\em Topology and its Applications}, 89(1-2):179--202, 1998.

\bibitem{MR1475088}
A.~K. Seda.
\newblock Quasi-metrics and the semantics of logic programs.
\newblock {\em Fund. Inform.}, 29(1-2):97--117, 1997.

\bibitem{seda2010generalized}
A.K. Seda and P.~Hitzler.
\newblock Generalized distance functions in the theory of computation.
\newblock {\em The Computer Journal}, 53(4):443, 2010.

\bibitem{shore1996metrizability}
S.D. Shore and S.~Romaguera.
\newblock Metrizability of asymmetric spaces.
\newblock {\em Annals of the New York Academy of Sciences}, 806(1):382--392,
  1996.

\bibitem{MR2802480}
J.~Skakala and M.~Visser.
\newblock Bi-metric pseudo-{F}inslerian spacetimes.
\newblock {\em J. Geom. Phys.}, 61(8):1396--1400, 2011.

\bibitem{stoltenberg1969quasi}
R.A. Stoltenberg.
\newblock On quasi-metric spaces.
\newblock {\em Duke Mathematical Journal}, 36(1):65--71, 1969.

\bibitem{vickers2005localic}
S.~Vickers.
\newblock Localic completion of generalized metric spaces i.
\newblock {\em Theory and Applications of Categories}, 14(15):328--356, 2005.

\bibitem{MR0408876}
M.~S. Waterman, T.~F. Smith, and W.~A. Beyer.
\newblock Some biological sequence metrics.
\newblock {\em Advances in Math.}, 20(3):367--387, 1976.

\bibitem{wilson1931quasi}
W.~Wilson.
\newblock On quasi-metric spaces.
\newblock {\em American Journal of Mathematics}, 53(3):675--684, 1931.

\end{thebibliography}

\end{document}